\newcommand{\cc}[1]{\mathcal{#1}}
\newcommand{\ra}[1]{\overrightarrow{#1}}
\numberwithin{equation}{section}
\newtheorem{theorem}{Theorem}%{\bf}{\it }
\newtheorem{lemma}[theorem]{Lemma}%{\bf}{\it }
\newtheorem{claim}[theorem]{Claim}%{\bf}{\it }
\begin{document}
\title{Directed path-decompositions}%Is this too much of an "in-joke"?
\author{
Joshua Erde%
\footnote{
University of Hamburg, 
Department of Mathematics, 
Bundesstra\"se 55 (Geomatikum),
20146 Hamburg, 
Germany.
\tt{joshua.erde@uni-hamburg.de}
}%
\footnote{The author was supported by the Alexander von Humboldt Foundation.}
}
\maketitle
\begin{abstract}
Many of the tools developed for the theory of tree-decompositions of graphs do not work for directed graphs. In this paper we show that some of the most basic tools do work in the case where the model digraph is a directed path. Using these tools we define a notion of a directed blockage in a digraph and prove a min-max theorem for directed path-width analogous to the result of Bienstock, Roberston, Seymour and Thomas for blockages in graphs. Furthermore, we show that every digraph with directed path width $\geq k$ contains each arboresence of order $\leq k$ 
%(Check plus minus 1 
as a butterfly minor. Finally we also show that every digraph admits a linked directed path-decomposition of minimum width, extending a result of Kim and Seymour on semi-complete digraphs.
\end{abstract}
\section{Introduction}
Given a tree $T$ and vertices $t_1,t_2 \in V(T)$ let us denote by $t_1Tt_2$ the unique path in $T$ between $t_1$ and $t_2$. Given a graph $G=(V,E)$ a \emph{tree-decomposition} is a pair $(T,\cc{V})$ consisting of a tree $T$, together with a collection of subsets of vertices $\cc{V} = \{ V_t \subseteq V(G) \, : \, t \in V(T)\}$, called \emph{bags}, such that: 
\begin{itemize}
\item $V(G) = \bigcup_{t \in T} V_t$;
\item For every edge $e \in E(G)$ there is a $t$ such that $e$ lies in $V_t$;
\item $V_{t_1} \cap V_{t_3} \subseteq V_{t_2}$ whenever $t_2 \in V(t_1 T t_3)$.
\end{itemize}

The \emph{width} of this tree-decomposition is the quantity $\max \{ |V_t| -1 \, : \, t \in V(T) \}$ and its \emph{adhesion} is $\max \{ |V_t \cap V_{t'}| \, : \, (t,t') \in E(T) \}$. Given a graph $G$ its \emph{tree-width} tw$(G)$ is the smallest $k$ such that $G$ has a tree-decomposition of width $k$. A \emph{haven of order $k$} in a graph $G$ is a function $\beta$ which maps each set $X \subseteq V(G)$ of fewer than $k$ vertices to some connected component of $G-X$ such that, for each such $X$ and $Y$, $\beta(X)$ and $\beta(Y)$ touch. That is,  either $\beta(X) \cap \beta(Y) \neq \emptyset$ or there is an edge between $\beta(X)$ and $\beta(Y)$. Seymour and Thomas \cite{ST93} showed that these two notions are dual to each other, in the following sense:

\begin{theorem}\label{t:treehav}[Seymour and Thomas]
A graph has tree-width $\geq k-1$ if and only if it has a haven of order $\geq k$.
\end{theorem}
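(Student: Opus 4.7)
\emph{Proof plan.} I would handle the two directions separately.

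\emph{Easy direction: haven of order $k$ implies tree-width $\geq k-1$.} By contradiction, suppose $\beta$ is a haven of order $k$ and $(T, \cc{V})$ is a tree-decomposition of width at most $k-2$, so every bag has at most $k-1$ vertices. For each edge $e = t_1 t_2$ of $T$ the adhesion $V_{t_1}\cap V_{t_2}$ has size at most $k-1$, so $\beta(V_{t_1}\cap V_{t_2})$ is defined; I would orient $e$ towards whichever of the two subtrees of $T-e$ corresponds to the side of the decomposition containing this component. A finite oriented tree has a sink $t$. Applying $\beta$ to $V_t$ itself (valid since $|V_t|\leq k-1$), for each neighbour $t'$ of $t$ the haven axiom forces $\beta(V_t)$ and $\beta(V_t\cap V_{t'})$ to touch, and since $\beta(V_t)$ is a connected subset of $G-V_t\subseteq G-(V_t\cap V_{t'})$, this touching must take the form of containment $\beta(V_t)\subseteq \beta(V_t\cap V_{t'})$. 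Hence $\beta(V_t)$ lies on the $t$-side of every edge incident to $t$; by the standard intersection-of-subtrees argument for tree-decompositions this forces $\beta(V_t)\subseteq V_t$, contradicting that $\beta(V_t)$ is a component of $G-V_t$.

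\emph{Hard direction: no haven of order $k$ implies tree-width $\leq k-2$.} I would proceed by induction on $|V(G)|$, but prove the stronger statement that for every $W\subseteq V(G)$ with $|W|\leq k-1$ there is a tree-decomposition of width at most $k-2$ in which some bag contains $W$. The inductive step seeks a separation $(A,B)$ of $G$ with $A\cup B=V(G)$, $|A\cap B|\leq k-1$, $W\subseteq A$, $B\setminus A\neq\emptyset$, and no edge from $A\setminus B$ to $B\setminus A$, such that the induced subgraphs $G[A]$ and $G[B]$ still have no haven of order $k$ (verified by pulling a putative haven back to $G$ along the separation). One then recurses on $(G[A], W)$ and $(G[B], A\cap B)$ and glues the resulting tree-decompositions along an edge between the distinguished bags.

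\emph{Main obstacle.} The core difficulty is showing that a suitable separation $(A,B)$ exists whenever $|V(G)|>k-1$. The plan is to derive it from the absence of a haven: were no such separation available, one could, for every $X\subseteq V(G)$ of size less than $k$, consistently pick a component of $G-X$ in a manner compatible with touchingness, and thereby assemble a haven of order $k$ in $G$, contradicting the hypothesis. Organising this choice so that the recursion actually terminates with all bags (and not merely adhesions) of size at most $k-1$, and in particular so that the distinguished set $W$ sits inside a single bag, is where the real work lies; I would expect to need a careful minimality argument — for instance, choosing $(A,B)$ minimal in a lexicographic ordering on $(|A\cap B|, |B\setminus A|)$ — in the style of Seymour and Thomas.
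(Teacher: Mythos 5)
The paper does not prove this theorem --- it quotes it from Seymour and Thomas [ST93] as background --- so there is no in-paper proof to compare against; I will assess your argument on its own merits.

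Your easy direction is correct and follows the standard argument: orient each edge $e=t_1t_2$ of $T$ toward the side of $T-e$ whose bags contain $\beta(V_{t_1}\cap V_{t_2})$, take a sink $t$, and observe that for each neighbour $t'$ the touching condition forces $\beta(V_t)\subseteq\beta(V_t\cap V_{t'})$, which lies on the $t$-side of $tt'$; intersecting over all neighbours $t'$ pins $\beta(V_t)$ inside $V_t$, contradicting that it is a nonempty component of $G-V_t$. (You should dispose of the degenerate case $|V(T)|=1$ separately, where the intersection over an empty set of neighbours is vacuous, but that case is immediate.)

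Your hard direction has a genuine gap in the gluing step. After finding the separation $(A,B)$ you recurse on $(G[A],W)$, obtaining a tree-decomposition of $G[A]$ with $W$ inside a distinguished bag $V_a$, and on $(G[B],A\cap B)$, obtaining one of $G[B]$ with $A\cap B$ inside a bag $V_b$, and then join $a$ to $b$. For the result to be a tree-decomposition of $G$ the connectivity axiom must hold, and for a vertex $v\in A\cap B$ the set of nodes whose bags contain $v$ meets both $T_A$ and $T_B$ (it contains $b$); since the only edge between $T_A$ and $T_B$ is $ab$, this set is connected only if $v\in V_a$, i.e.\ only if $A\cap B\subseteq V_a$. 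Your inductive hypothesis only guarantees $W\subseteq V_a$, and $A\cap B$ need not lie inside $W$; nor can you simply recurse on $(G[A], W\cup(A\cap B))$, since that set may have up to $2(k-1)$ vertices. Repairing this requires restructuring the induction --- for instance proving the stronger statement that $W$ can be made an exact bag and then recursing, for each component $C$ of $G-W$, on $G[C\cup N(C)]$ with distinguished set $N(C)\subseteq W$ --- together with a measure-of-progress argument so the recursion terminates. That termination argument is precisely where the absence of a haven of order $k$ must be invoked and is the real content of the Seymour--Thomas proof; as written, your inductive step does not close.
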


When $T$ is a path we say that $(T,\cc{V})$ is a \emph{path-decomposition} and the \emph{path-width} pw$(G)$ of a graph $G$ is the smallest $k$ such that $G$ has a path-decomposition of width $k$. Whilst the path-width of a graph is clearly an upper bound for its tree-width, these parameters can be arbitrarily far apart. Bienstock, Robertson, Seymour and Thomas \cite{BRST91} showed that, as for tree-width, there is a structure like a haven, called a \emph{blockage of order $k$}, such that the path-width of a graph is equal to the order of the largest blockage.

Explicitly, for any subset $X$ of vertices in a graph $G$ let us write $\partial(X)$ for the set of $v \in X$ which have a neighbour in $V(G)-X$. Two subsets $X_1,X_2 \subset V(G)$ are \emph{complementary} if $X_1 \cup X_2 = V(G)$ and $\partial(X_1) \subseteq X_2$ (or, equivalently, $\partial(X_2) \subseteq X_1)$. A blockage of order $k$ is a set $\cc{B}$ such that:
\begin{itemize}
\item  each $X \in \cc{B}$ is a subset of $V(G)$ with $|\partial(X)| \leq k$;
\item if $X \in \cc{B}$, $Y \subseteq X$ and $|\partial(Y)| \leq k$, then $Y \in \cc{B}$;
\item if $X_1$ and $X_2$ are complementary and $|X_1 \cap X_2| \leq k$, then $\cc{B}$ contains exactly one of $X_1$ and $X_2$.
\end{itemize}

\begin{theorem}\label{t:pathblock}[Bienstock, Robertson, Seymour and Thomas]
A graph has path-width $\geq k$ if and only if it has a blockage of order $\geq k$.
\end{theorem}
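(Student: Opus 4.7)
I would prove each direction separately.

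\emph{Blockage $\Rightarrow$ $\mathrm{pw}(G) \geq k$.} Suppose for contradiction that $G$ has a blockage $\cc{B}$ of order $\geq k$ and a path-decomposition $(V_1,\ldots,V_n)$ of width $\leq k-1$. For $0 \leq i \leq n$ set $A_i := V_1 \cup \cdots \cup V_i$ and $B_i := V_{i+1} \cup \cdots \cup V_n$. The interval property makes $(A_i,B_i)$ a complementary pair with $A_i \cap B_i \subseteq V_i \cap V_{i+1}$, so of size $\leq k$. By axiom 2, $\{i : A_i \in \cc{B}\}$ is an initial segment of $\{0, \ldots, n\}$ while $\{i : B_i \in \cc{B}\}$ is a terminal segment; combined with axiom 3 these partition $\{0, \ldots, n\}$ as $\{0, \ldots, a\} \sqcup \{a+1, \ldots, n\}$ for some $-1 \leq a \leq n$. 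The extremal values $a \in \{-1, n\}$ each put both $\emptyset$ and $V(G)$ in $\cc{B}$, contradicting axiom 3 for the pair $(\emptyset, V(G))$. Otherwise I would finish by considering the complementary pair $(A_{a+1}, B_a)$: the interval property gives $A_{a+1} \cap B_a \subseteq V_{a+1}$, hence of size $\leq k$, while neither side lies in $\cc{B}$, again contradicting axiom 3.

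\emph{$\mathrm{pw}(G) \geq k \Rightarrow$ blockage.} I would construct $\cc{B}$ by a maximality argument. Let $\cc{B}$ be a maximal family in $2^{V(G)}$ satisfying axioms 1 and 2 together with the weakening of axiom 3 that \emph{at most} one side of each small-intersection complementary pair belongs to $\cc{B}$. If this $\cc{B}$ is a genuine blockage we are done; otherwise some small-intersection complementary pair $(X_1,X_2)$ has neither side in $\cc{B}$. By maximality, adjoining $X_1$ to $\cc{B}$ and closing under axioms 1 and 2 must create a pair $(Y,Y')$ with both sides in the enlarged family, forcing $Y \subseteq X_1$ (up to symmetry) while $Y' \in \cc{B}$; and similarly for $X_2$. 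The plan is to unpack these two obstructions into low-width \emph{rooted} path-decompositions of $G[X_1]$ and $G[X_2]$ in which $\partial(X_1)$ and $\partial(X_2)$ respectively occupy a distinguished end-bag. Since $\partial(X_1) \cup \partial(X_2) \subseteq X_1 \cap X_2$ by complementarity, these two decompositions can be concatenated via a bridging bag equal to $X_1 \cap X_2$ (size $\leq k$) to produce a path-decomposition of $G$ of width $\leq k-1$, contradicting the hypothesis.

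The crux is the unpacking step, which translates a set-theoretic maximality failure into a concrete low-width decomposition of each induced subgraph. I anticipate introducing a \emph{rooted} version of the theorem, in which a prescribed vertex set is forced to appear in a distinguished end-bag of the decomposition and the dual obstruction is a rooted analogue of a blockage; this rooted version would be proved by induction on $|V(G)|$ and would specialise to the present statement. The concatenation along $X_1 \cap X_2$ is then clean, because complementarity already places the relevant boundaries within the shared separator.
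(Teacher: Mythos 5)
The paper never proves Theorem~\ref{t:pathblock}: it is quoted as a result of Bienstock, Robertson, Seymour and Thomas, and what the paper actually proves is the directed analogue, Theorem~\ref{t:main} via Theorem~\ref{t:diblock}. So there is no in-paper proof to compare against, only the method used for the directed version. Your forward direction (blockage $\Rightarrow$ $\mathrm{pw}(G)\ge k$) is correct and is the standard short argument: the prefix/suffix pairs $(A_i,B_i)$ are complementary with $|A_i\cap B_i|\le k$, the down-closure axiom makes $\{i:A_i\in\cc{B}\}$ an initial segment and $\{i:B_i\in\cc{B}\}$ a terminal segment, axiom~3 makes them partition the index set, and the threshold pair $(A_{a+1},B_a)$ then contradicts axiom~3. (One unhandled degeneracy: if some $A_i=B_i$ the ``exactly one'' axiom already fails and the partition argument does not literally apply; this only happens when $|V(G)|\le k$, but it should be dispatched explicitly.)

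The converse, which is the substance of the theorem, is a plan rather than a proof, and the missing step is the whole theorem. After choosing a maximal $\cc{B}$ satisfying axioms~1,~2 and the ``at most one side'' weakening of axiom~3 and finding an undecided complementary pair $(X_1,X_2)$, you propose to ``unpack'' the two maximality obstructions into low-width rooted path-decompositions of $G[X_1]$ and $G[X_2]$. But a maximality obstruction is merely another complementary pair both of whose sides lie in an enlarged set family; it carries no decomposition, and no mechanism is given for extracting one. The entire content is then deferred to a ``rooted version proved by induction on $|V(G)|$'' that is not stated or carried out. You do put your finger on the right issue --- controlling the width of the concatenation across $X_1\cap X_2$ --- and this is exactly what the paper's shifting machinery (Lemmas~\ref{l:shift1} and~\ref{l:bags}) supplies in the directed setting: in the proof of Theorem~\ref{t:diblock} the induction is on the number of separations a consistent partial orientation leaves undecided, and the two admissible paths obtained from the inductive calls are first shifted onto a separation $(X,Y)$ of minimal order between them, so that the join does not increase any bag. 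Without a lemma of that kind (or the original BRST induction on $|V(G)|$ with a genuinely formulated rooted statement), the maximal-family framing does not by itself produce a decomposition, and the argument as written does not close.
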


The authors used the theorem to show the following result.
\begin{theorem}[Bienstock, Robertson, Seymour and Thomas]\label{t:min}
For every forest $F$, every graph with pathwidth $\geq |V(F)| -1$ has a minor isomorphic to $F$.
\end{theorem}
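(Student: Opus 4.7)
The plan is to proceed by induction on $|V(F)|$. The base case $|V(F)| = 1$ is immediate: the condition pw$(G) \geq 0$ merely forces $V(G) \neq \emptyset$, and any single vertex of $G$ is a model of $F$.

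For the inductive step set $k = |V(F)| - 1 \geq 1$ and assume pw$(G) \geq k$. I first dispose of the easy case in which $F$ has an isolated vertex $v$: using the standard fact that deleting a single vertex can lower path-width by at most $1$, I pick any $x \in V(G)$ (which exists since pw$(G) \geq 1$ forces $|V(G)| \geq 2$), apply the inductive hypothesis to the forest $F - v$ (on $k$ vertices) and the graph $G - x$ (of path-width $\geq k-1$) to obtain a model of $F - v$ in $G - x$, and take $\{x\}$ as the branch set of the isolated vertex $v$. This produces the required model of $F$ in $G$.

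The substantive case is when $F$ has a leaf $v$ with unique neighbour $u$; set $F' := F - v$, a forest on $k$ vertices. By Theorem \ref{t:pathblock}, $G$ carries a blockage $\cc{B}$ of order $\geq k$. The idea is to find a set $X \subseteq V(G)$ together with a model of $F'$ in $G[X]$ whose branch set $B_u$ meets $\partial(X)$; any neighbour of $B_u$ in $V(G) \setminus X$ then serves as the branch set of $v$, completing a model of $F$ in $G$.

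To produce such an $X$ and model, I will strengthen the inductive hypothesis to track both a distinguished vertex of the forest and a side of the blockage: roughly, for every rooted forest $(F', u)$ and every graph $H$ equipped with a blockage $\cc{B}$ of order $\geq |V(F')|$, for some inclusion-extremal $X$ relative to $\cc{B}$ (for instance, $X \notin \cc{B}$ inclusion-minimal subject to $|\partial(X)| \leq |V(F')|$), the induced subgraph $H[X]$ will contain a model of $F'$ whose branch set $B_u$ meets $\partial(X)$. The main obstacle is verifying this strengthened statement inductively: one must combine the downward-closure and complementary-pair axioms of the blockage to show that, after deleting a suitable boundary vertex, a blockage of order $\geq k-1$ is induced on a smaller graph to which the inductive hypothesis applies, all while preserving the requirement that the relevant branch set reaches the boundary. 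Orchestrating this descent so that the three blockage axioms interact cleanly, and making sure the extremal choice of $X$ really does deliver a boundary vertex available for the branch set corresponding to $u$, is the delicate technical step.
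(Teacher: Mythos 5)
The paper does not prove this statement; it cites it from \cite{BRST91}, so there is no in-paper proof to compare against. Your outline is a plausible plan, and the base case and the isolated-vertex case are handled correctly, but the proof itself is not completed: you reduce the problem to a strengthened inductive statement (for a rooted forest $(F',u)$ and a graph carrying a blockage of order $\geq |V(F')|$, some extremal $X$ admits an $F'$-model in $G[X]$ whose branch set for $u$ meets $\partial(X)$) and then explicitly defer its verification as ``the delicate technical step.'' That step is precisely the content of the Bienstock--Robertson--Seymour--Thomas argument; as written your proposal is a plan for a proof, not a proof.

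Two concrete gaps in the deferred step. First, your extremality condition is phrased in terms of $|\partial(X)|$, while the complementary-pair axiom of a blockage involves $|X_1 \cap X_2|$; these interact (since $\partial(X_1), \partial(X_2) \subseteq X_1 \cap X_2$ for a complementary pair) but not trivially, and you do not establish that an inclusion-minimal $X \notin \cc{B}$ with $|\partial(X)| \leq |V(F')|$ exists, is nontrivial, or produces a boundary vertex in the right position. Second, you assert without argument that deleting a boundary vertex yields a blockage of order $\geq |V(F')| - 1$ on the smaller graph that still supports the boundary requirement for the new root; neither part of this is established. It is also worth noting that the paper's own proof of its directed analogue, Theorem \ref{t:arborescence}, avoids blockages entirely, using a direct iterative construction via up-linked shifts and Menger's theorem; transporting that construction back to the undirected setting would likely be a cleaner route than filling in the blockage descent.
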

In particular, since large binary trees have large path-width, it is a simple corollary of this result that if $X$ is a graph, then the tree-width of graphs in
\[
\text{Forb}_{\preceq}(X) := \{G \, : \, X \text{ is not a minor of } G \}
\]
is bounded if and only if $X$ is a forest. A more direct proof of this fact, without reference to blockages, was later given by Diestel \cite{D95}.

There have been numerous suggestions as to the best way to extend the concept of tree-width to digraphs. For example, directed tree-width \cite{JRST01,JRST01a,R99}, D-width \cite{S05}, Kelly width \cite{HK08} or DAG-width \cite{O06,BDHKO12}. In some of these cases generalizations of Theorem \ref{t:treehav} have been studied, the hope being to find some structure in the graph whose existence is equivalent to having large `width'. However these results have either not given an exact equivalence \cite[Theorem 3.3]{JRST01}, or only apply to certain classes of digraphs \cite[Corollary 3]{S05}. 

In contrast, if we wish to decompose a digraph in a way that the model digraph is a directed path, it is perhaps clearer what a sensible notion of a `directed path-decomposition' should be. The following definition appears in Bar{\'a}t \cite{B06} and is attributed to Robertson, Seymour and Thomas. We note that it agrees with the definition of a DAG-decomposition, in the case where the DAG is a directed path. %Perhaps curiously, however, it does not agree with the definition of a directed tree-decomposition if the underlying arborescence is a directed path.
 
Given a digraph $D=(V,E)$ a \emph{directed path-decomposition} is a pair $(P,\cc{V})$ consisting of a path $P$, say with $V(P)= \{t_1,t_2, \ldots, t_n\}$, together with a collection of subsets of vertices $\cc{V} = \{ V_i \subseteq V(D) \, : \, i \in [n] \}$ such that: 
\begin{itemize}
\item $V(D) = \bigcup_{t \in V(P)} V_t$;
\item if $i<j<k$, then $V_i \cap V_k \subseteq V_j$;
\item for every edge $e=(x,y)\in E(P)$ there exists $i \leq j$ such $x \in V_i$ and $y \in V_j$.
\end{itemize}

The \emph{width} of a directed path-decomposition is $\max \{ |V_i| -1 \, : \, i \in [n] \}$ and its \emph{adhesion} is $\max \{ |V_i \cap V_{i+1}| \, : \, i \in [n-1] \}$. Given a digraph $D$ its \emph{directed path-width} dpw$(D)$ is the smallest $k$ such that $D$ has a directed path-decomposition of width $k$.

Motivated by Theorem \ref{t:pathblock}, Bar{\'a}t \cite{B06} defined a notion of a blockage in a digraph and showed that if the directed path width is at most $k-1$ then there is no `blockage of order $k$'. However he suggested that it was unlikely that the existence of such a `blockage of order $k$' was equivalent to having directed path-width at least $k$, at least for this particular notion of a blockage.

One of the problems with working with directed graphs is that many of the tools developed for the theory of tree-decompositions of graphs do not work for digraphs. However, we noticed that in the case of directed path-decompositions some of the most fundamental tools do work almost exactly as in the undirected case. More precisely, Bellenbaum and Diestel \cite{BD02} explicitly extracted a lemma from the work of Thomas \cite{T90}, and used it to give short proofs of two theorems: The first, Theorem \ref{t:treehav}, and the second a theorem of Thomas on the existence of linked tree-decompositions. In \cite{KS15} Kim and Seymour prove a similar theorem on the existence of linked directed path-decompositions for semi-complete digraphs. Their proof uses a tool analogous to the key lemma of Bellenbaum and Diestel.

This suggested that perhaps a proof of Theorem \ref{t:pathblock} could be adapted, to give an analogue for directed path-width. Indeed, in this paper we show that this is the case. Influenced by ideas from \cite{DO14a,DO14b} and \cite{AMNT09} we will define a notion of \emph{diblockage}\footnote{A precise definition will be given in Section \ref{s:path}.} in terms of orientations of the set of directed separations of a digraph. We will use generalizations of ideas and tools developed by Diestel and Oum \cite{DO14a,DO14b} to prove the following.

\begin{theorem}\label{t:main}
A directed graph has directed path-width $\geq k-1$ if and only if it has a diblockage of order $\geq k$.
\end{theorem}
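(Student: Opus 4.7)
The plan is to follow the Diestel-Oum template \cite{DO14a,DO14b} for width/obstruction duality, interpreting a diblockage of order $k$ as a ``consistent orientation'' of the directed separations of $D$ of order $< k$ that picks one side of each separation and is closed under taking sub-separations. The forward direction is short: given a directed path-decomposition $(P,\mathcal{V})$ of width $\leq k-2$, the prefix unions $A_i = V_1\cup\cdots\cup V_i$ form a nested chain of directed separations of order $\leq k-1$. A hypothetical diblockage $\mathcal{B}$ of order $\geq k$ must, by the orientation axiom, put one of the two sides of each such separation into $\mathcal{B}$. Since the two trivial ends of the chain force $\emptyset \in \mathcal{B}$ from both sides, the assignment must flip at some step $i$; applying the downward-closure axiom at the flip, together with the fact that the middle adhesion set has at most $k-1$ vertices, yields the required contradiction.

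For the converse I would argue the contrapositive: assuming $D$ admits no diblockage of order $\geq k$, build a directed path-decomposition of width $\leq k-2$. The main tool is a directed analogue of the Bellenbaum-Diestel lemma \cite{BD02} (and of the lemma used by Kim and Seymour \cite{KS15} for semi-complete digraphs): given two directed separations $(A_1,B_1)$ and $(A_2,B_2)$ of $D$ of order $\leq k-1$, the ``corner'' pairs $(A_1\cap A_2, B_1\cup B_2)$ and $(A_1\cup A_2, B_1\cap B_2)$ are again directed separations, and their orders satisfy a submodular inequality. Absence of a diblockage then means that for every partial consistent orientation of the small separations, one can refine along a new separation of order $\leq k-1$. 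Iterating this refinement starting from the trivial separation $(\emptyset, V(D))$ produces a maximal nested chain $(\emptyset,V(D)) = (A_0,B_0) < \cdots < (A_n,B_n) = (V(D),\emptyset)$ whose consecutive differences $V_i := A_i \cap B_{i-1}$ all have size $\leq k-1$. Reading off the chain as a sequence of bags gives a directed path-decomposition of width $\leq k-2$.

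The main obstacle will be the directed submodularity step. In the undirected case, the corner constructions are automatically separations because the cut condition is symmetric, so one only has to track orders; in the directed case the condition ``no arc from $B\setminus A$ to $A\setminus B$'' is one-sided, and one must check that it survives when intersecting and uniting the $A$-sides. A related subtlety is ensuring that the resulting chain not only is nested but also respects the linear edge-orientation clause in the definition of a directed path-decomposition, namely that every arc $(x,y)$ can be realised with $x$ in a bag no later than $y$. Encoding this as an additional consistency requirement in the definition of a diblockage, and then verifying that the refinement step from the key lemma preserves it, is where the directed analogue diverges most substantially from the undirected proof and is likely the delicate heart of the argument.
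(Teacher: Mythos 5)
Your forward direction is essentially the paper's: a diblockage must orient the chain of prefix separations $(A_i, B_i)$ arising from a low-width path-decomposition, the trivial ends force opposite orientations, and at the flip the small middle bag contradicts the diblockage's size condition. That part is fine.

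Your converse, however, has two substantive problems. First, your working definition of a diblockage as ``a consistent orientation of the order-$<k$ directed separations that picks a side of each and is closed under sub-separations'' is missing the crucial axiom that makes the theorem nontrivial: in the paper a diblockage must also satisfy that if $(A,B)$ is oriented ``large on the right'' and $(C,D)\geq(A,B)$ is oriented ``large on the left'', then $|B\cap C|\geq k$. Without that third axiom the notion has no teeth --- consistent orientations extending the trivial partial orientation always exist, so nothing blocks any graph. Your ``refinement'' step implicitly needs precisely this axiom to find a small separation between two consecutive links of the chain, but you have not built it into the structure.

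Second, and more seriously, the ``iterate refinement to build a maximal nested chain'' step does not actually yield the width bound. Maximality of a nested chain of separations of order $<k$ does not imply that the consecutive bags $A_i\cap B_{i-1}$ have size $\leq k-1$; that is exactly the gap the theorem has to bridge. The paper does not build a chain greedily. Instead it proves a stronger statement by induction on the number of separations left \emph{unoriented} by a consistent partial orientation $\mathcal{P}$: pick an unoriented separation, orient it both ways to obtain two smaller partial orientations, get admissible $\overrightarrow{S}_k$-paths for each by induction, then \emph{shift} both onto a common linked separation of minimum order between the two leaf separations and glue them there. The directed Bellenbaum--Diestel shifting lemma (Lemma \ref{l:bags}) is what guarantees that shifting does not increase bag sizes, and that is what makes the glued path have width $<k-1$. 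Your sketch cites the submodular corner lemma but does not describe this induction-and-gluing mechanism, and without it the bound on bag sizes is unjustified.

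A smaller point: your concern about having to encode the edge-orientation clause of a directed path-decomposition as an extra consistency requirement is a non-issue. The definition of a directed separation (no arc from $B\setminus A$ to $A\setminus B$) already carries the orientation information, and the paper verifies in Lemma 2.2 that any order-preserving chain of directed separations automatically yields bags satisfying the arc condition. So the ``delicate heart'' you anticipate is not where the real work lies; the real work is the shifting lemma and the induction on partial orientations.
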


We also use these ideas to give a result in the spirit of Theorem \ref{t:min}. An \emph{arborescence} is a directed graph in which there is a unique vertex $u$, called the \emph{root}, such that for every other vertex $v$ there is exactly one directed path from $u$ to $v$. A forest of arborescences is a directed graph in which each component is an arborescence. There is a generalisation of the notion of a minor to directed graphs called a \emph{butterfly minor}, whose precise definition we will defer until Section \ref{s:arb}.

\begin{restatable}{theorem}{arb}\label{t:arborescence}
For every forest of arborescences $F$, every digraph with directed path-width $\geq |V(F)| -1$ has a butterfly minor isomorphic to $F$.
\end{restatable}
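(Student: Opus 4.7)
I would use induction on $n=|V(F)|$, with the base case $n=1$ being immediate since $\text{dpw}(D)\ge 0$ forces $V(D)\ne\emptyset$ and any vertex of $D$ serves as a butterfly minor isomorphic to a single vertex. For the inductive step, Theorem \ref{t:main} converts the hypothesis $\text{dpw}(D)\ge n-1$ into a diblockage $\cc{B}$ of $D$ of order $\ge n$, and the plan is to use $\cc{B}$ as a guide for building a butterfly-minor model of $F$ in $D$ one vertex of $F$ at a time, mirroring the proof of the undirected Theorem \ref{t:min} via Theorem \ref{t:pathblock}.

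To set up the induction, pick a vertex $v\in V(F)$ of out-degree $0$ (a leaf of one of the arborescence components of $F$, or an isolated vertex of $F$), so that $F':=F-v$ is again a forest of arborescences on $n-1$ vertices, and write $u$ for the in-neighbour of $v$ in $F$ when $v$ is not isolated. I would prove by induction a strengthening of the theorem that, alongside a butterfly-minor model $(B_f)_{f\in V(F)}$, carries a distinguished root $r_f\in B_f$ in each branch-set together with a small-order directed separation $(A,B)$ of $D$ lying in $\cc{B}$ such that $\bigcup_f B_f\subseteq A$. Applied to $F'$, this furnishes a model $(B_f,r_f)_{f\in V(F')}$ inside some small side $A'\in\cc{B}$ whose separator has size less than $n$; because $\cc{B}$ has order $\ge n$, the opposite side is non-empty, and one locates a vertex $w$ on the big side, either by picking any such $w$ (if $v$ is isolated) or by extracting a directed edge $(r_u,w)\in E(D)$ (otherwise). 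Setting $B_v:=\{w\}$ and $r_v:=w$ extends the model, and a slight enlargement of $A'$ restores the strengthened statement for $F$.

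The main obstacle is extracting the crossing edge from $r_u$ to the big side of the separation: the diblockage by itself only tells us which side is ``big,'' so one must deduce from its defining axioms that at a suitably chosen small-order separation the out-neighbourhood of $r_u$ actually reaches into the big side. This is the point at which the directedness of the separations in $\cc{B}$ is used essentially and the argument departs from the undirected analogue; tracking the separation through the induction, and choosing it carefully at each step so as to guarantee the next crossing edge, is the central bookkeeping of the proof. Butterfly-contractibility of the resulting model is, in contrast, essentially automatic, because each branch-set $B_f$ is built up by appending single vertices adjacent to the existing model via the chosen root, so the in-branching of $D[B_f]$ produced by the construction always admits a sequence of butterfly contractions collapsing it to a single vertex.
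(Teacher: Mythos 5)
Your proposal takes a genuinely different route from the paper, and as written it has two concrete gaps. The paper does not pass through Theorem~\ref{t:main} or diblockages at all: instead it works directly with partial $\ra{S}$-paths of width $<n$, considers $\leq$-minimal initial leaf separations $(C_i,D_i)$ of such paths subject to $(C_i,D_i)\leq(C_{i-1},D_{i-1}\cup\{x^{i-1}_{i-1}\})$ and $|C_i,D_i|=i$, applies the shifting Lemma~\ref{l:bags} to show these choices are realisable, and then uses Menger's theorem to thread vertex-disjoint directed paths between the successive separators $C_{i-1}\cap D_{i-1}$ and $C_i\cap D_i$; these paths become the branch sets. The existence of the ``crossing'' edge is a consequence of $\leq$-minimality: if $x^i_j\in C_i\cap D_i$ had no out-neighbour in $C_i\setminus D_i$ then $(C_i\setminus\{x^i_j\},D_i)$ would be a strictly smaller separation in $\ra{S}'_{n+1}$, a contradiction. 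That extremal step is precisely what your plan leaves open: the diblockage is an existential object, and its axioms (extending $\cc{P}_\omega$, consistency, the order condition at nested oriented pairs) do not by themselves force the root $r_u$ to have an out-neighbour in the ``big'' side of the tracked separation. You explicitly flag this as ``the central bookkeeping,'' but that is the theorem; without an analogue of the minimality argument in the diblockage framework the induction does not close.

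There is a second, independent problem. You extend the model by setting $B_v:=\{w\}$ and asking for a literal edge $(r_u,w)\in E(D)$. Since the base case is also a singleton, this scheme builds a model in which every branch set is a single vertex and every $F$-edge is realised by a genuine edge of $D$ -- i.e.\ it would prove that $F$ embeds as a subgraph, not merely as a butterfly minor. That is false: the bidirected complete binary tree $\ra{T}_n$ has directed path-width $\tfrac{n-1}{2}$ (as the paper notes in the appendix) but maximum out-degree $3$, so for $n\geq 9$ it satisfies the hypothesis with $F$ the out-star on $5$ vertices yet contains no such subgraph. The subgraph claim is only rescued in the paper's proof because the branch sets are Menger paths connecting successive separators, and the final model is contracted down; your proposal omits Menger's theorem entirely, and the closing remark that butterfly-contractibility ``is essentially automatic because each $B_f$ is built up by appending single vertices'' is inconsistent with the recipe $B_v:=\{w\}$ it is meant to justify. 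Both gaps would need to be filled before this alternative route could be considered a proof.
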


Finally, we extend the result of Kim and Seymour on linked directed path-decompositions to arbitrary digraphs.

\begin{theorem}\label{t:mainlinked}
Every directed graph $D$ has a linked directed path-decomposition of width dpw$(D)$.
\end{theorem}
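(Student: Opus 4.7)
The plan is to adapt the Bellenbaum--Diestel proof \cite{BD02} of the undirected analogue to the directed setting, extending the approach of Kim and Seymour \cite{KS15} from semi-complete to arbitrary digraphs. The key ingredient is a corner lemma for directed separations (a directed analogue of the tool Bellenbaum and Diestel extract from Thomas \cite{T90}); the rest is a careful bookkeeping argument.

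First I would pin down linkedness. Call a directed path-decomposition $(P,\cc{V})$ with $V(P)=\{t_1,\ldots,t_n\}$ \emph{linked} if, for every $i \leq j$, the minimum order of a directed separation $(A,B)$ of $D$ with $V_i \subseteq A$ and $V_j \subseteq B$ equals $\min_{i \leq k < j}|V_k \cap V_{k+1}|$; equivalently, by Menger's theorem for digraphs, there are that many internally disjoint directed paths in $D$ from $V_i \setminus V_j$ to $V_j \setminus V_i$. The inequality $\leq$ is automatic from the canonical separations $(A_k,B_k) := (V_1 \cup \cdots \cup V_k,\, V_{k+1} \cup \cdots \cup V_n)$, which have order exactly $|V_k \cap V_{k+1}|$; linkedness is the reverse inequality.

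I would then establish the corner lemma: if $(S_1,T_1)$ and $(S_2,T_2)$ are directed separations of $D$, then $(S_1 \cap S_2,\, T_1 \cup T_2)$ is a directed separation, and
\[
|S_1 \cap T_1| + |S_2 \cap T_2| \;=\; |(S_1 \cap S_2)\cap(T_1 \cup T_2)| + |(S_1 \cup S_2)\cap(T_1 \cap T_2)|.
\]
Now take a directed path-decomposition $(P,\cc{V})$ of width dpw$(D)$ that minimises, lexicographically, the decreasingly sorted sequence of its adhesion sizes. Suppose for contradiction it is not linked: there exist $i \leq j$ and a directed separation $(S,T)$ with $V_i \subseteq S$, $V_j \subseteq T$, and $|S \cap T| < \ell := \min_{i \leq k < j}|V_k \cap V_{k+1}|$. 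Pick $k \in [i,j-1]$ realising this minimum, apply the corner lemma to $(S,T)$ and $(A_k,B_k)$, and use the resulting directed separation---which has order strictly less than $\ell$ and sits at position $k$ on the path---to rewrite $(P,\cc{V})$, so that its $k$-th adhesion strictly decreases, contradicting lex-minimality.

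The main obstacle will be this rewriting step: verifying that the modified pair remains a valid directed path-decomposition of no larger width. Both the interval property and the requirement that every edge $(x,y)\in E(D)$ be captured by some pair $i \leq j$ with $x\in V_i$ and $y\in V_j$ must be preserved, and the asymmetry between the two corners of a pair of directed separations---only one being automatically a directed separation---gives the replacement a preferred orientation; confirming that this is compatible with the orientation of $P$, so that no "backward" edges are created, is the delicate point.
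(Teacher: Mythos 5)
Your plan correctly identifies the key ingredients — the submodularity identity (your ``corner lemma'', equation (\ref{e:sub}) in the paper), shifting, and an extremal choice of decomposition — and this is indeed the paper's overall strategy. But the gap you yourself flag in the rewriting step is real, and it interacts fatally with your choice of minimisation invariant. A local replacement of $(A_k,B_k)$ by the small corner cannot be done in isolation: the corner is generally not sandwiched between $(A_{k-1},B_{k-1})$ and $(A_{k+1},B_{k+1})$, so nestedness is destroyed. You must shift a whole interval onto $(X,Y)$. The paper does this by taking the up-shift of the tail $t_i,\ldots,t_n$ onto $(X,Y)$ with respect to $(A_i,B_i)$, the down-shift of the head $t_1,\ldots,t_{j+1}$ onto $(X,Y)$ with respect to $(A_j,B_j)$, and gluing them at a new bag $X\cap Y$. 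A single ``shift at $k$'' on the unchanged path does not work: the bag at the interface becomes $(A_{k+1}\cup X)\cap(B_k\cup Y)$, which can strictly exceed $A_{k+1}\cap B_k$, so width control is lost. The new bag $X\cap Y$ is necessary, and it lengthens the path.

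Once the path lengthens, your lexicographic minimisation on the sorted adhesion sizes no longer yields a contradiction. Lemmas \ref{l:shift1} and \ref{l:bags} only give that each order does not increase under the shift, and at a minimal decomposition one in fact has, for every $r>|X,Y|$, exactly as many adhesions of order $\geq r$ in the glued decomposition $\hat P$ as in $P$ (Claims \ref{c:one}, \ref{c:two}); $\hat P$ just has more small adhesions. So the decreasingly sorted sequence of $\hat P$ is not smaller than that of $P$, and ``its $k$-th adhesion strictly decreases'' fails. The paper therefore minimises a finer invariant: lexicographically over $r$ from large to small, the pair $\bigl(e(P_r),-c(P_r)\bigr)$, where $P_r$ is the linear subforest of $P$ induced by edges $e$ with $|\alpha(e)|\geq r$ and $e(\cdot),c(\cdot)$ count edges and components. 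The edge counts match your intent; the component counts provide the actual contradiction. Since every adhesion between positions $i$ and $j$ has order $>|X,Y|$, the edges $e_i$ and $e_j$ lie in one component of $P_{|X,Y|+1}$; but the glued edge of $\hat P$ carries $(X,Y)$ and so drops out of $\hat P_{|X,Y|+1}$, splitting that component. This is incompatible with minimality (Claim \ref{c:two}). Without tracking $c(P_r)$, your argument does not close. (A small additional point: since a directed separation $(A,B)$ blocks directed paths from $B$ to $A$, the Menger reformulation of linkedness should speak of disjoint directed paths from $V_j$ to $V_i$, not the other way around.)
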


The paper is structured as follows. In Section \ref{s:path} we define directed path-decompositions and introduce our main tool of shifting. In section \ref{s:blo} we define our notion of a directed blockage and prove Theorem \ref{t:main}. In Section \ref{s:arb} we prove Theorem \ref{t:arborescence}. Finally, in Section \ref{s:linked} we discuss linked directed path-decompositions and prove Theorem \ref{t:mainlinked}.

\section{Directed path-decompositions}\label{s:path}
Following the ideas of Diestel and Oum \cite{DO14a} it will be more convenient for us to rephrase the definition of a directed path-decomposition in terms of directed separations. Given a digraph $D = (V,E)$, a pair $(A,B)$ of subsets of $V$ is a \emph{directed separation} if $A \cup B = V$ and there is no edge $(x,y) \in E$ with $x \in B\setminus A$ and $y \in A \setminus B$. Equivalently, every directed path which starts in $B$ and ends in $A$ must meet $A \cap B$.

For brevity, since in this paper we usually be considering directed graphs, when the context is clear we will refer to directed separations simply as separations. The \emph{order} of a separation $(A,B)$, which we will denote by $|A,B|$, is $|A\cap B|$ and we will write $\ra{S}_k$ for the set of separations of order $<k$ and define $\ra{S} := \bigcup_k \ra{S}_k$.

We define a partial order on $\ra{S}$ by 
\[
(A,B) \leq (C,D) \text{    if and only if    } A \subseteq C \text{ and } B \supseteq D.
\]
We will also define two operations $\wedge$ and $\vee$ such that, for $(A,B),(C,D) \in \ra{S}$
\[
(A,B) \wedge (C,D) = (A \cap C, B \cup D) \text{    and    } (A,B) \vee (C,D) = (A \cup B, C \cap D) .
\]
\begin{lemma}
If $(A,B)$ and $(C,D)$ are separations then so are $(A,B) \wedge (C,D)$ and $(A,B) \vee (C,D)$.
\end{lemma}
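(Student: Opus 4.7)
The plan is a direct verification from the definitions: I need to check, for each of the two constructed pairs, that (i) the two coordinates cover $V$ and (ii) there is no edge going from the right-minus-left part to the left-minus-right part. Both checks amount to routine set-theoretic manipulation combined with a small case analysis that invokes the separation property of $(A,B)$ or $(C,D)$.

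For the covering condition, both equalities $(A \cap C) \cup (B \cup D) = V$ and $(A \cup C) \cup (B \cap D) = V$ follow by elementary distributivity from $A \cup B = V = C \cup D$: given $v \in V$, partition by which of $A$ or $B \setminus A$ and which of $C$ or $D \setminus C$ contains $v$, and check each of the four boxes directly.

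For the edge condition on $(A \cap C, B \cup D)$, suppose for contradiction that some edge $(x,y)$ has $x \in (B \cup D) \setminus (A \cap C)$ and $y \in (A \cap C) \setminus (B \cup D)$. Then $y \in A \setminus B$ and $y \in C \setminus D$ simultaneously. Applying the separation property of $(A,B)$ to $(x,y)$ forces $x \notin B \setminus A$, and applying that of $(C,D)$ forces $x \notin D \setminus C$; combining these with $x \in B \cup D$ and $x \notin A \cap C$ produces either $x \notin A \cup B$ or $x \notin C \cup D$, contradicting that $(A,B)$ and $(C,D)$ are separations. The argument for $(A \cup C, B \cap D)$ is the symmetric dual: a putative bad edge $(x,y)$ must have $x \in B \cap D$ with $x \notin A \cup C$, and $y \in A \cup C$ with $y \notin B \cap D$; after splitting on whether $y \in A$ or $y \in C$ and on whether $y \in D$ or $y \in B$ respectively, one of the two separations $(A,B)$ or $(C,D)$ is violated.

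The only step that requires any care is to make sure the case analysis is complete, i.e.\ that one exploits the covering conditions $A \cup B = V$ and $C \cup D = V$ to rule out the residual cases. There is no real obstacle; the lemma is essentially the statement that the lattice operations for undirected separations continue to respect the one-sided edge condition, and the proof transcribes the undirected argument with the relevant asymmetry tracked in which endpoint lies on which side.
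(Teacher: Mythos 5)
Your proof is correct and follows essentially the same approach as the paper's: a direct verification of the covering and edge conditions from the definitions, with the contradiction reduced to the separation property of $(A,B)$ or $(C,D)$. You are slightly more explicit than the paper about how the covering hypotheses $A\cup B = V = C\cup D$ close off the residual cases, which is a welcome clarification rather than a departure.
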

\begin{proof}
Let us prove the claim for $(A,B) \wedge (C,D)$, the proof for $(A,B) \vee (C,D)$ is similar. Firstly, since $A\cup B = V$ and $C \cup D = V$ it follows that 
\[ (A \cap C) \cup (B \cup D)= (A \cup B \cup D) \cap (C \cup B \cup D) = V \cap V = V.
\]

Secondly, let $(x,y) \in E$ with $y \in (A \cap C) \setminus (B \cup D)$. Then, $y \in A \setminus B$ and $y \in C \setminus D$ and so, since $(A,B)$ and $(C,D)$ are separations, it follows that $x \not\in B \setminus A$ and $x \not\in D \setminus C$. Hence, $x \not\in (B \cup D) \setminus (A \cap C)$ and so $(A \cap C, B \cup D)$ is a separation.
\end{proof}
Given some subset $\ra{S}' \subseteq \ra{S}$, we define an \emph{$\ra{S}'$-path} to be a pair $(P,\alpha)$ where $P$ is a path with vertex set $V(P) = \{ t_1,t_2,\ldots t_n\}$ and $\alpha: E(T) \rightarrow \ra{S}'$ is such that if $1 \leq i < j \leq n-1$ then $\alpha(t_i,t_i+1) \leq \alpha(t_j,t_{j+1})$. Note that, if we consider $P$ as a directed path, with edges $(t_i,t_{i+1})$ for each $i$ then $\alpha$ preserves the natural order on the edges of $P$. In this way we can view this notion as a generalisation of the $S$-trees of Diestel and Oum \cite{DO14a}.

We claim that, if $(P,\cc{V})$ is a directed path-decomposition then for every $1 \leq i \leq n-1$ the following is a separation
\[
(\bigcup_{j \leq i} V_j, \bigcup_{j \geq i+1} V_j).
\]
Indeed, by definition $\bigcup_i V_i = V(D)$ and if $(x,y)$ were an edge from  $\bigcup_{j \geq i+1} V_j \setminus \bigcup_{j \leq i} V_j$ to $\bigcup_{j \leq i} V_j \setminus \bigcup_{j \geq i+1} V_j$ then clearly there could be no $i<j$ with $x \in V_i$ and $y \in V_j$. In this way $(P,\cc{V})$ gives an $\ra{S}$-path by letting 
\[
\alpha(t_i,t_{i+1}) =  (\bigcup_{j \leq i} V_j, \bigcup_{j \geq i+1} V_j).
\]

Conversely, if $(P,\alpha)$ is an $\ra{S}$ path let us write $(A_i,B_i) : =  \alpha(t_i,t_{i+1})$, and let $B_0 = V= A_{n+1}$. Note that, if $i<j$ then $(A_i,B_i) \leq (A_j,B_j)$ and so $A_i \subseteq A_j$ and $B_i \supseteq B_j$.

\begin{lemma}
Let $(P,\alpha)$ be an $\ra{S}$ path and $A_i,B_i$ be as above. For $1 \leq i \leq n$ let $V_i = A_i \cap B_{i-1}$. Then $(P,\cc{V})$ is a directed path-decomposition.
\end{lemma}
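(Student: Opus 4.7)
The plan is to verify, in turn, the three defining axioms of a directed path-decomposition. Throughout the argument I will use the monotonicity $A_i \subseteq A_j$ and $B_i \supseteq B_j$ whenever $i \leq j$ (with the boundary conventions $B_0 = V = A_n$ and, for convenience, $A_0 = \emptyset = B_n$), together with the identity $A_i \cup B_i = V$ coming from each $(A_i, B_i)$ being a separation. Only the edge axiom will genuinely use the \emph{separation} property; the first two axioms follow from pure set-theoretic monotonicity.

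For the covering axiom $V(D) = \bigcup_i V_i$, fix $v \in V$ and let $i_0 := \min\{i \geq 1 : v \in A_i\}$, which exists since $v \in A_n = V$. If $i_0 = 1$ then $v \in A_1 \cap B_0 = V_1$; otherwise $v \notin A_{i_0-1}$ and $A_{i_0-1} \cup B_{i_0-1} = V$ gives $v \in B_{i_0-1}$, so $v \in V_{i_0}$. For the interval axiom, if $i < j < k$ and $v \in V_i \cap V_k$, then $v \in A_i \subseteq A_j$ and $v \in B_{k-1} \subseteq B_{j-1}$, so $v \in A_j \cap B_{j-1} = V_j$.

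The main work lies in the edge axiom. For each $v \in V$ let $I(v) := \{i : v \in V_i\}$, which by the previous paragraph is a non-empty interval of $\{1, \ldots, n\}$. Given an edge $(x,y) \in E(D)$ I need to produce $i \leq j$ with $i \in I(x)$ and $j \in I(y)$; equivalently, $\min I(x) \leq \max I(y)$. Suppose for contradiction that $m := \max I(y) < \ell := \min I(x)$, so $1 \leq m \leq \ell - 1 \leq n-1$. From $y \in V_m \setminus V_{m+1}$ combined with $A_m \subseteq A_{m+1}$ I deduce $y \notin B_m$, hence $y \in A_m \setminus B_m$. From $x \in V_\ell \setminus V_{\ell-1}$ combined with $B_{\ell-1} \subseteq B_{\ell-2}$ I deduce $x \notin A_{\ell-1}$; combined with $A_m \subseteq A_{\ell-1}$ this forces $x \notin A_m$, and hence $x \in B_m \setminus A_m$. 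But then the edge $(x,y)$ runs from $B_m \setminus A_m$ into $A_m \setminus B_m$, contradicting the fact that $(A_m, B_m) = \alpha(t_m, t_{m+1})$ is a separation. This exploitation of the separation axiom to force the required index inequality is the only real obstacle in the proof; the boundary case $\ell = 1$ or $m = n$ is excluded automatically by the chain $1 \leq m < \ell \leq n$.
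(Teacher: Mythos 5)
Your proof is correct and takes essentially the same approach as the paper: verify the three axioms in turn, with the edge axiom handled by contradiction via an edge that would cross a separation in the forbidden direction. The only cosmetic difference is that the paper first establishes $\bigcup_{i \leq j} V_i = A_j$ and $\bigcup_{i \geq j} V_i = B_{j-1}$ by induction and reads off the contradiction at index $\ell-1$ (in your notation), while you work directly with the interval $I(v)$ and place the contradiction at index $m$; both land on the same misdirected edge.
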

\begin{proof}
Firstly, we claim that for $1 \leq j \leq n-1$, $\bigcup_{i=1}^j V_i = A_j$. Indeed, $V_1 = A_1 \cap B_0 = A_1$ and, if the claim holds for $j-1$ then
\[
\bigcup_{i=1}^j V_i  = A_{i-1} \cup V_i = A_{i-1} \cup (A_i \cap B_{i-1}) = A_i,
\]
since $A_{i-1} \cup B_{i-1} = V$ and $A_{i-1} \subseteq A_i$. As we will use it later, we note that a similar argument shows that $\bigcup_{i=j}^n V_i = B_{j-1}$. Hence,
\[
\bigcup_{i=1}^n V_i = A_{n-1} \cup V_n =  A_{n-1} \cup (A_n \cap B_{n-1}) =  A_{n-1} \cup B_{n-1} = V.  
\]

Next, suppose that $i < j <k$. We can write 
\[
V_i \cap V_k = (A_i \cap B_{i-1}) \cap (A_k \cap B_{k-1}).
\]
Since $(A_i,B_i) \leq (A_j,B_j)$, it follows that $A_i \subseteq A_j$ and since $(A_{j-1},B_{j-1}) \leq (A_{k-1},B_{k-1})$ and so $B_{k-1} \subseteq B_{j-1}$. Hence
\[
V_i \cap V_k = (A_i \cap B_{i-1}) \cap (A_k \cap B_{k-1}) \subseteq A_j \cap B_{j-1} = V_j.
\]

Finally, suppose for a contradiction there is some edge $(x,y)$ such that there is no $i \leq j$ with $x \in V_i$ and $y \in V_j$. Since $\bigcup V_i =  V$ there must be $i > j$ with $x \in V_i$ and $y \in V_j$. Pick such a pair with $i$ minimal. It follows that $x \in V_i \setminus \bigcup_{k=1}^{i-1} V_k = (A_i \cap B_{i-1}) \setminus A_{i-1} \subset B_{i-1} \setminus A_{i-1}$ by the previous claim. Also, $y \in V_j \setminus \bigcup_{k\geq i} V_k  =  (A_j \cap B_{j-1} ) \setminus B_{i-1} \subseteq A_{i-1} \setminus B_{i-1}$ since $A_j \subseteq A_{i-1}$. However, this contradicts the fact that $(A_{i-1},B_{i-1})$ is a separation.
\end{proof}

In this way the two notions are equivalent. We say that the \emph{width} of an $\ra{S}$ path $(P,\alpha)$ is the width of the path-decomposition $(P,\cc{V})$ given by $V_i = A_i \cap B_{i-1}$. The following observation will be useful.

\begin{lemma}\label{l:prop}
If $(P,\alpha)$ is an $\ra{S}$-path of width $< k-1$ with $\alpha(t_i,t_{i+1}) = (A_i,B_i)$ and $A_0=V=B_n$, then 
\begin{itemize}
\item $(P,\alpha)$ is an $\ra{S}_k$-path;
\item $(A_i,B_{i-1}) \in \ra{S}_k$ for each $1 \leq i \leq n$.
\end{itemize}
\end{lemma}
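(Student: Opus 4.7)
The plan is to unpack the definitions. Since the width of $(P,\alpha)$ is less than $k-1$, by definition the bags $V_i := A_i \cap B_{i-1}$ of the associated directed path-decomposition (with the boundary convention $A_0 = V = B_n$) satisfy $|V_i| \leq k-1$ for every $1 \leq i \leq n$. I would handle the second bullet first, because the argument for the first reduces to a one-line inclusion once it is done.

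For the second bullet, I first need to check that $(A_i, B_{i-1})$ is actually a directed separation and not merely a pair of subsets; once this is known, its order is $|A_i \cap B_{i-1}| = |V_i| \leq k-1 < k$, so $(A_i, B_{i-1}) \in \ra{S}_k$ as required. To verify the separation property, I would use the $\leq$-monotonicity of $\alpha$ along $P$ to conclude $A_{i-1} \subseteq A_i$, and deduce
\[
A_i \cup B_{i-1} \;\supseteq\; A_{i-1} \cup B_{i-1} \;=\; V.
\]
For the edge condition, suppose for contradiction there were an edge $(x,y)$ with $x \in B_{i-1} \setminus A_i$ and $y \in A_i \setminus B_{i-1}$. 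Then $x \in B_{i-1} \setminus A_{i-1}$ (as $A_{i-1} \subseteq A_i$), while $y \notin B_{i-1}$ together with $A_{i-1} \cup B_{i-1} = V$ forces $y \in A_{i-1} \setminus B_{i-1}$. This contradicts the fact that $(A_{i-1}, B_{i-1})$ is a separation. The boundary cases $i=1$ and $i=n$, in which $A_0 = V$ or $B_n = V$, are vacuous.

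For the first bullet, $\leq$-monotonicity also gives $B_i \subseteq B_{i-1}$, so
\[
A_i \cap B_i \;\subseteq\; A_i \cap B_{i-1} \;=\; V_i.
\]
Therefore $|\alpha(t_i,t_{i+1})| = |A_i \cap B_i| \leq |V_i| \leq k-1 < k$, and $(P,\alpha)$ is indeed an $\ra{S}_k$-path.

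I do not anticipate any substantial obstacle; the statement is essentially a book-keeping exercise translating the bag-based width bound into a bound on the orders of the separations along $P$ and of the auxiliary separations $(A_i,B_{i-1})$. The only line where something more than a direct substitution is needed is confirming that $(A_i,B_{i-1})$ is a separation, and this follows immediately from $(A_{i-1},B_{i-1})$ being one, using the monotonicity of $\alpha$.
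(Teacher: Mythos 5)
Your proof is correct and follows essentially the same approach as the paper: the first bullet via $B_i \subseteq B_{i-1}$, and the second by using $A_{i-1}\subseteq A_i$ and $A_{i-1}\cup B_{i-1}=V$ to reduce the separation property of $(A_i,B_{i-1})$ to that of $(A_{i-1},B_{i-1})$, then bounding the order by the bag size. The only cosmetic differences are that you prove the bullets in the opposite order, phrase the edge condition as a contradiction rather than the paper's direct set containment $B_{i-1}\setminus A_i\subseteq B_{i-1}\setminus A_{i-1}$ and $A_i\setminus B_{i-1}\subseteq A_{i-1}\setminus B_{i-1}$, and spell out the covering condition $A_i\cup B_{i-1}=V$ which the paper leaves implicit.
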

\begin{proof}
For the first we note that, since $(A_{i-1},B_{i-1}) \leq (A_i,B_i)$, it follows that $B_{i-1} \supseteq B_i$ and hence
\[
|A_i,B_i| = |A_i \cap B_i| \leq |A_i \cap B_{i-1}| <  k.
\]

For the second, note that, since $A_{i-1} \subseteq A_i$, $B_{i-1} \setminus A_i \subset B_{i-1} \setminus A_{i-1}$, and since $A_{i-1} \cup B_{i-1} = V$, $A_i \setminus B_{i-1} \subset A_{i-1} \setminus B_{i-1}$. Hence, there is no edge from $B_{i-1} \setminus A_i$ to $A_i \setminus B_{i-1}$ and so $(A_i,B_{i-1}) \in \ra{S}$. Finally, since $|A_i \cap B_{i-1}| < k$, $(A_i,B_{i-1}) \in \ra{S}_k$.

\end{proof}
\subsection{Shifting an $\ra{S}_k$-path}
One of the benefits of thinking of a directed path-decomposition in terms of the separations it induces, rather than the bags, is that it allows one to easily describe some of the operations that one normally performs on tree-decompositions.

Given an $\ra{S}$-path $(P,\alpha)$ with $V(P) = \{t_1, \ldots, t_n\}$, let us write $\alpha(t_i,t_{i+1}) = (A_i,B_i)$. We call $(A_1,B_1)$ the \emph{initial leaf separation} of $(P,\alpha)$ and $(A_{n-1},B_{n-1})$ the \emph{terminal leaf separation}. It will be useful to have an operation which transforms an $\ra{S}$-path into one with a given initial/terminal leaf separation.

Let $(P,\alpha)$ be as above and let $(A_i,B_i) \leq (X,Y) \in \ra{S}$. The \emph{up-shift of $(P,\alpha)$ onto $(X,Y)$ with respect to $(A_i,B_i)$} is the $\ra{S}$-path $(P',\alpha')$ where $V(P')=\{t'_i, t'_{i+1}, \ldots, t'_n\}$ and $\alpha'$ is given by
\[
\alpha'(t'_j,t'_{j+1}) := (A_j,B_j) \vee (X,Y) = (A_j \cup X,B_j \cap Y).
\]
It is simple to check that, if $i \leq j < k \leq n-1$ then $\alpha'(t'_j,t'_j+1) \leq \alpha'(t'_k,t'_{k+1})$, and so $(P',\alpha')$ is an $\ra{S}$-path. We note that the initial leaf separation of $(P',\alpha')$ is $\alpha'(t'_i,t'_{i+1}) = (A_i,B_i) \vee (X,Y) = (X,Y)$

Similarly if $(X,Y) \leq (A_i,B_i)$ the \emph{down-shift of $(P,\alpha)$ onto $(X,Y)$ with respect to $(A_i,B_i)$} is the $\ra{S}$-path $(P',\alpha')$ where $V(P') = \{t'_1,t'_2, \ldots, t'_{i+1}\}$ and  $\alpha'$ is given by
\[
\alpha'(t'_j,t'_{j+1}) := (A_j,B_j) \wedge (X,Y) = (A_j \cap X,B_j \cup Y).
\]
We note that the terminal leaf separation of $(P',\alpha')$ is $\alpha'(t'_i,t'_{i+1}) = (A_i,B_i) \wedge (X,Y) = (X,Y)$.

If $(P,\alpha)$ is an $\ra{S}_k$-path and $(P',\alpha')$ is an up/down-shift of $(P,\alpha)$ then, whilst $(P',\alpha')$ is an $\ra{S}$-path, it is not always the case that it will also be an $\ra{S}_k$-path, since the order of some of the separations could increase. We note that if $(A,B)$ and $(C,D)$ are separations then $\wedge$ and $\vee$ satisfy the following equality
\begin{equation}\label{e:sub}
|A,B| + |C,D| = |A \cup C,B \cap D| + |A\cap C, B \cup D|.
\end{equation}

Given a pair of separations $(A,B) \leq (C,D) \in \ra{S}$ let us denote by
\[
\lambda\big( (A,B), (C,D) \big) := \min \{ |X,Y| \, : \, (X,Y) \in \ra{S} \text{ and } (A,B) \leq (X,Y) \leq (C,D) \}.
\]
We say that $(X,Y)$ is \emph{up-linked} to $(A,B)$ if $(A,B) \leq (X,Y)$ and $|X,Y| = \lambda\big((A,B),(X,Y)\big)$. Similarly $(X,Y)$ is \emph{down-linked} to $(A,B)$ if $(X,Y)\leq (A,B)$ and $|X,Y| = \lambda\big((X,Y),(A,B)\big)$.

\begin{lemma}\label{l:shift1}
Let $(P,\alpha)$ be an $\ra{S}_k$-path with $V(P) = \{ t_1,t_2, \ldots, t_n\}$, with $\alpha(t_j,t_{j+1}) = (A_j,B_j)$ for each $j$. If $(X,Y)$ is up-linked to $(A_i,B_i)$ then the up-shift of $(P,\alpha)$ onto $(X,Y)$ with respect to $(A_i,B_i)$ is an $\ra{S}_k$-path and if $(X,Y)$ is down-linked to $(A_i,B_i)$ then the down-shift of $(P,\alpha)$ onto $(X,Y)$ with respect to $(A_i,B_i)$ is an $\ra{S}_k$-path.
\end{lemma}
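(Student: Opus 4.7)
The proof plan is to reduce both claims to the submodularity identity (\ref{e:sub}) combined with the defining property of up/down-linkedness. I will give the argument for the up-shift; the down-shift case is entirely symmetric.

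So fix $j$ with $i\le j\le n-1$ and consider the separation $(A_j,B_j)\vee (X,Y)=(A_j\cup X, B_j\cap Y)$. By (\ref{e:sub}) we have
\[
|A_j,B_j|+|X,Y| \;=\; |A_j\cup X, B_j\cap Y|+|A_j\cap X, B_j\cup Y|,
\]
so to conclude that $|A_j\cup X, B_j\cap Y|<k$ it suffices to show that $|A_j\cap X,B_j\cup Y|\ge |X,Y|$. Since $(A_i,B_i)\le (X,Y)$ and $(A_i,B_i)\le (A_j,B_j)$, we have $A_i\subseteq A_j\cap X$ and $B_i\supseteq B_j\cup Y$, so $(A_i,B_i)\le (A_j\cap X,B_j\cup Y)$. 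Clearly $(A_j\cap X, B_j\cup Y)\le (X,Y)$. Thus $(A_j\cap X, B_j\cup Y)$ lies in the interval between $(A_i,B_i)$ and $(X,Y)$, and hence by the assumption that $(X,Y)$ is up-linked to $(A_i,B_i)$,
\[
|A_j\cap X, B_j\cup Y|\;\ge\; \lambda\bigl((A_i,B_i),(X,Y)\bigr)\;=\;|X,Y|.
\]
Combining this with the submodularity identity yields $|A_j\cup X, B_j\cap Y|\le |A_j,B_j|<k$, as required. The fact that $(P',\alpha')$ is an $\ra{S}$-path (i.e.\ that the $\le$-monotonicity along edges is preserved and that each image is a separation) was already observed in the definition of the up-shift, so this completes the argument in the up-linked case.

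For the down-shift case, replace $\vee$ by $\wedge$ throughout: for $j\le i$ one considers $(A_j\cap X, B_j\cup Y)$, notes that $(X,Y)\le (A_j\cup X, B_j\cap Y)\le (A_i,B_i)$ using $X\subseteq A_i$, $A_j\subseteq A_i$ and the corresponding containments on the $B$-side, and invokes the down-linkedness of $(X,Y)$ to $(A_i,B_i)$ to obtain $|A_j\cup X, B_j\cap Y|\ge |X,Y|$. Substituting into (\ref{e:sub}) gives $|A_j\cap X, B_j\cup Y|\le |A_j,B_j|<k$.

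I don't expect any genuine obstacle here: the whole content of the lemma is the standard submodularity trick, adapted to the directed setting where one has to be slightly careful about which of the two terms on the right-hand side of (\ref{e:sub}) is bounded below by the linkedness hypothesis. The only thing worth double-checking when writing it out is that the separation that one bounds below by $|X,Y|$ really does lie between $(A_i,B_i)$ and $(X,Y)$ (respectively $(X,Y)$ and $(A_i,B_i)$), which is immediate from the containments defining $\le$ on $\ra{S}$.
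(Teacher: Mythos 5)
Your argument is correct and matches the paper's proof essentially verbatim: both bound $|A_j\cap X, B_j\cup Y|$ from below by $|X,Y|$ using the fact that $(A_j\cap X, B_j\cup Y)$ lies between $(A_i,B_i)$ and $(X,Y)$ together with up-linkedness, and then feed this into the submodularity identity (\ref{e:sub}) to get $|A_j\cup X, B_j\cap Y|\le |A_j,B_j|<k$. The down-shift case is handled the same way in the paper (by symmetry), and your sketch of it is also correct.
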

\begin{proof}
By the discussion above it is clear that both are $\ra{S}$-paths, so it remains to show that the set of separations in the paths lie in $\ra{S}_k$. We will show the first claim, the proof of the second follows along similar lines. Let $(P',\alpha')$ be the up-shift of $(P,\alpha)$ onto $(X,Y)$ with respect to $(A_i,B_i)$, with $P' = \{t'_i, t'_{i+1}, \ldots, t'_n\}$.

Let $i \leq j \leq n-1$. We wish to show that $\alpha'(t'_j,t'_{j+1}) = (A_j \cup X,B_j \cap Y)$ is in $\ra{S}_k$. We note that since $(A_i,B_i) \leq (A_j,B_j)$ and $(A_i,B_i) \leq (X,Y)$, it follows that
\[
(A_i,B_i) \leq (A_j \cap X,B_j \cup Y) \leq (X,Y)
\]
and so, since $(X,Y)$ is up-linked to $(A_i,B_i)$, 
\[
|A_j \cap X,B_j \cup Y| \geq |X,Y|.
\]
Therefore, by (\ref{e:sub}), 
\[
|A_j \cup X,B_j \cap Y| \leq |A_j,B_j| <k.
\]
Hence $\alpha'(t'_j,t'_{j+1})=(A_j \cup X,B_j \cap Y) \in \ra{S}_k$.
\end{proof}

Recall that the width of an $\ra{S}$-path $(P,\alpha)$ is $\max_{1 \leq i \leq n} |A_i \cap B_{i-1}| - 1$. We would like to claim that, apart from the bag at the initial leaf in an up-shift, or the bag at the terminal leaf in a down-shift, shifting does not increase the size of the bags.

Again, this will not be true for general shifts however, if the assumptions of Lemma \ref{l:shift1} hold, then it will hold. The proof of the fact follows the proof of \cite[Lemma 6.1]{DO14b}, which itself plays the role of the key lemma of Thomas from \cite{BD02}. The fact that this lemma remains true for directed path decompositions is what allows us to prove our Theorems \ref{t:main} and \ref{t:mainlinked}.

\begin{lemma}\label{l:bags}
Let $(P,\alpha)$ be an $\ra{S}_k$-path with $V(P) = \{ t_1,t_2, \ldots, t_n\}$, let $\alpha(t_j,t_{j+1}) = (A_j,B_j)$ for each $j$, and let $B_0 = V = A_n$. Let $\omega_j = |A_j \cap B_{j-1}|$ be the size of the bags. Suppose that $(X,Y)$ is up-linked to $(A_i,B_i)$ and $(P',\alpha')$ is the up-shift of $(P,\alpha)$ onto $(X,Y)$ with respect to $(A_i,B_i)$, with $\alpha'(t'_j,t'_{j+1}) = (A'_j,B'_j)$ for each $j$, and $B'_i = V = A'_n$. If $\omega'_j = |A'_j \cap B'_{j-1}|$, then for each $i+1 \leq j \leq n$ $\omega'_j \leq  \omega_j$.

Similarly, suppose that $(X,Y)$ is down-linked to $(A_i,B_i)$ and $(P',\alpha')$ is the up-shift of $(P,\alpha)$ onto $(X,Y)$ with respect to $(A_i,B_i)$, with $\alpha'(t'_j,t'_{j+1}) = (A'_j,B'_j)$ for each $j$, and $B'_0 = V = A'_{i+1}$. If $\omega'_j = |A'_j \cap B'_{j-1}|$, then for each $1 \leq j \leq i$ $\omega'_j \leq  \omega_j$.
\end{lemma}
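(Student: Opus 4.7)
The plan is to prove the up-shift statement in detail; the down-shift case is symmetric, obtained by interchanging the roles of $\wedge$ and $\vee$. Fix $j$ with $i+1 \leq j \leq n$, so that $A'_j = A_j \cup X$ and $B'_{j-1} = B_{j-1} \cap Y$, and the bag to be controlled is $\omega'_j = |A_j \cup X,\, B_{j-1} \cap Y|$. The key auxiliary object is the pair $(A_j, B_{j-1})$, which by Lemma~\ref{l:prop} is a separation of order $\omega_j$. Applying the submodularity equality~(\ref{e:sub}) to $(A_j, B_{j-1})$ and $(X, Y)$ gives
\[
\omega_j + |X, Y| = \omega'_j + |A_j \cap X,\, B_{j-1} \cup Y|,
\]
so the desired bound $\omega'_j \leq \omega_j$ is equivalent to $|A_j \cap X,\, B_{j-1} \cup Y| \geq |X, Y|$.

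To establish this inequality I would exhibit the separation $(A_j \cap X,\, B_{j-1} \cup Y) = (A_j, B_{j-1}) \wedge (X, Y)$ as sandwiched between $(A_i, B_i)$ and $(X, Y)$ in the partial order on $\ra{S}$, namely
\[
(A_i, B_i) \leq (A_j \cap X,\, B_{j-1} \cup Y) \leq (X, Y),
\]
and then invoke the assumption that $(X, Y)$ is up-linked to $(A_i, B_i)$. The upper comparison is immediate, and the lower one reduces to the four inclusions $A_i \subseteq A_j$, $A_i \subseteq X$, $B_i \supseteq B_{j-1}$, $B_i \supseteq Y$, all of which follow from $(A_i, B_i) \leq (A_{j-1}, B_{j-1}) \leq (A_j, B_j)$ and $(A_i, B_i) \leq (X, Y)$. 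The definition of $\lambda$ then delivers $|A_j \cap X,\, B_{j-1} \cup Y| \geq |X, Y|$, closing the up-shift case.

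For the down-shift the mirror image applies: for $1 \leq j \leq i$ the bag is $\omega'_j = |A_j \cap X,\, B_{j-1} \cup Y|$, and~(\ref{e:sub}) now reduces the claim to $|A_j \cup X,\, B_{j-1} \cap Y| \geq |X, Y|$. The join $(A_j, B_{j-1}) \vee (X, Y)$ that appears here sits the other way around in the order, sandwiched between $(X, Y)$ and $(A_i, B_i)$; the relevant comparisons come from $A_j \subseteq A_i$ and $B_{j-1} \supseteq B_i$ (both of which use $j \leq i$) together with $(X, Y) \leq (A_i, B_i)$, and down-linkedness of $(X, Y)$ to $(A_i, B_i)$ then supplies the bound.

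I do not foresee any real obstacle. The entire argument is the bookkeeping of identifying $(A_j, B_{j-1}) \wedge (X, Y)$ (respectively $(A_j, B_{j-1}) \vee (X, Y)$) as a separation lying in the interval on which the link hypothesis is applied, and reading off the inequality from~(\ref{e:sub}). The reason this scheme, which is the key lemma of Bellenbaum and Diestel in the undirected case, carries over without change to the directed setting is that the earlier sections have already established both that $\wedge$ and $\vee$ of directed separations are directed separations, and that the submodularity identity~(\ref{e:sub}) holds verbatim for $|{\cdot}\,,{\cdot}|$.
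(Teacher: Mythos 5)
Your proof is correct and follows the paper's approach exactly: identify $(A_j, B_{j-1})$ as a separation via Lemma~\ref{l:prop}, sandwich $(A_j,B_{j-1}) \wedge (X,Y)$ between $(A_i,B_i)$ and $(X,Y)$, invoke up-linkedness of $(X,Y)$ to bound its order from below by $|X,Y|$, and then read off $\omega'_j \leq \omega_j$ from the submodularity identity~(\ref{e:sub}). One small point in your favour: the paper's printed proof states $|A_j \cap X, B_{j-1} \cup Y| \leq |X,Y|$, where $\geq$ is the inequality actually delivered by the definition of $\lambda$ and required for the final step, and you have this direction right.
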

\begin{proof}
Again, we will just prove the first statement, as the proof of the second is analogous. Recall that $(A'_j,B'_j) = (A_j \cup X,B_j \cap Y)$ for each $i \leq j \leq n$. Hence, for $i+1 \leq j \leq n$
\[
\omega'_j = | (A_j \cup X) \cap (B_{j-1} \cap Y)| =  | A_j \cup X, B_{j-1} \cap Y|.
\]
Here we have used the fact that $(A_j,B_{j-1})$ is a separation, by Lemma \ref{l:prop}, to deduce that $(A_j \cup X, B_{j-1} \cap Y)$ is also a separation.

Now, since $(A_i,B_i) \leq (A_{j-1},B_{j-1}) \leq (A_j,B_j)$, it follows that $(A_i,B_i) \leq (A_j,B_{j-1})$. Therefore, since also $(A_i,B_i) \leq (X,Y)$,
\[
(A_i,B_i) \leq (A_j \cap X, B_{j-1} \cup Y) \leq (X,Y).
\]
Hence, since $(X,Y)$ is linked to $(A_i,B_i)$, 
\[
|A_j \cap X, B_{j-1} \cup Y| \leq |X,Y|.
\]
Therefore, by (\ref{e:sub}), 
\[
\omega'_j = | A_j \cup X, B_{j-1} \cap Y| \leq |A_j,B_{j-1}| = \omega_j.
\]
\end{proof}
\section{$\omega$-Diblockages}\label{s:blo}
In \cite{DO14b} the structures which are dual to the existence of $S$-trees are defined as orientations of the set of separations in a graph. That is a subset of the separations which, for each separation given by an unordered pair $\{A,B\}$, contains exactly one of $(A,B)$ or $(B,A)$. Heuristically, one can think of these orientations as choosing, for each separation $\{A,B\}$ one of the two sides $A$ or $B$ to designate as `large'. This idea generalises in some way the concept of tangles introduce by Robertson and Seymour \cite{RS91}.

In our case, since the directed separations we consider already have a defined `direction', we will define an orientation of the set of directed separations to be just a bipartition of the set of directed separations. However we will still think of a bipartition $\ra{S}_k = \cc{O}^+ \dot\cup \cc{O}^-$ as designating for each directed separation $(A,B) \in \ra{S}_k$ one side as being `large`, the side $B$ when $(A,B) \in \cc{O}^+$ and the side $A$ when $(A,B) \in \cc{O}^-$. Our notion of a directed blockage will then be defined as some way to make these choices for each $(A,B) \in \ra{S}_k$ in a consistent manner.

Let us make the preceding discussion more explicit. We define a \emph{partial orientation} of $\ra{S}_k$ to be a disjoint pair of subsets $\cc{O}=(\cc{O}^+,\cc{O}^-)$ such that $\cc{O}^+, \cc{O}^- \subseteq \ra{S}_k$. A partial orientation is an \emph{orientation} if $\cc{O}^+ \dot\cup \cc{O}^- = \ra{S}_k$. Given a partial orientation $\cc{P}$ of $\ra{S}_k$ let us write 
\[
\ra{S}_{\cc{P}} = \ra{S}_k \setminus (\cc{P}^+ \cup \cc{P}^-).
\]
We say a partial orientation $\cc{P} = (\cc{P}^+,\cc{P}^-)$ is \emph{consistent} if
\begin{itemize}
\item if $(A,B) \in \cc{P}^+$, $(A,B) \geq (C,D) \in \ra{S}_k$ then $(C,D) \in \cc{P}^+$;
\item if $(A,B) \in \cc{P}^-$, $(A,B) \leq (C,D) \in \ra{S}_k$ then $(C,D) \in \cc{P}^-$.
\end{itemize}
In the language of the preceeding discussion this formalises the intuitive idea that if $B$ is the large side of $(A,B)$ and $B \subseteq D$ then $D$ should be the large side of $(C,D)$ and similarly if $A$ is the large side of $(A,B)$ and $A \supseteq C$ then $C$ should be the large side of $(C,D)$.

An orientation $\cc{O}$ \emph{extends} a partial orientation $\cc{P}$ if $\cc{P}^+ \subseteq \cc{O}^+$ and $\cc{P}^- \subseteq \cc{O}^-$. Given $\omega \geq k$, let us define $\cc{P}_\omega = (\cc{P}_\omega^+,\cc{P}_\omega^-)$ by
\[
\cc{P}_\omega^+ = \{ (A,B) \in \ra{S}_k \, : \, |A| < \omega \} \text{   and    } \cc{P}_\omega^- =  \{ (A,B) \in \ra{S}_k \, : \, |B| < \omega \}.
\]
An \emph{$\omega$-diblockage} (of $\ra{S}_k$) is an orientation $\cc{O} = (\cc{O}^+ \cup \cc{O}^-)$ of $\ra{S}_k$ such that:
\begin{itemize}
\item $\cc{O}$ extends $\cc{P}_\omega$;
\item $\cc{O}$ is consistent;
\item if $(A,B) \in \cc{O}^+$ and $(A,B) \leq (C,D) \in \cc{O}^-$ then $|B \cap C| \geq \omega$.
\end{itemize}

We will show, for $\omega \geq k$, a duality between the existence of an $\ra{S}_k$-path of width $< \omega -1$ and that of an $\omega$-diblockage of $\ra{S}_k$. In the language of tree-decompositions, an $\ra{S}_k$-path of width $< \omega -1$ is a directed path-decomposition of width $< \omega -1$ in which all the \emph{adhesion sets} have size $<k$, where the adhesion sets in a tree-decomposition $(T,\cc{V})$ are the sets $\{V_t \cap V_{t'}\, : \, (t,t') \in E(T)\}$. Tree-decompositions of undirected graphs with adhesion sets of bounded size have been considered by Diestel and Oum \cite{DO14b} and Geelen and Joeris \cite{GJ16}. 

We require one more definition for the proof. Given a partial orientation $\cc{P}$ of $\ra{S}_k$ and $\omega \geq k$, we say that an $\ra{S}_k$-path $(P,\alpha)$ with $V(P)=\{t_1,\ldots,t_n\}$ and $\alpha(t_j,t_{j+1}) = (A_j,B_j)$ for each $j$ is \emph{$(\omega,\cc{P})$-admissable} if
\begin{itemize}
\item For each $ 2 \leq i \leq n-1$, $\omega_i := |A_i \cap B_{i-1}| < \omega$;
\item $(A_1,B_1) \in \cc{P}^+ \cup \cc{P}_\omega^+$; 
\item $(A_{n-1},B_{n-1}) \in \cc{P}^- \cup \cc{P}_\omega^-$;
\end{itemize}

When $\omega = k$ we call an $\omega$-diblockage of $\ra{S}_k$ a \emph{diblockage of order $k$}. In this way, when $\omega=k$ the following theorem implies Theorem \ref{t:main}.

\begin{theorem}\label{t:diblock}
Let $\omega \geq k \in \mathbb{N}$ and let $D=(V,E)$ be a directed graph with $|V| \geq k$. Then exactly one of the following holds:
\begin{itemize}
\item  $D$ has an $\ra{S}_k$-path of width $<\omega-1$;
\item  There is an $\omega$-diblockage of $\ra{S}_k$.
\end{itemize}
\end{theorem}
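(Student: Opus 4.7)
One direction is straightforward: if $(P,\alpha)$ is an $\ra{S}_k$-path of width $<\omega-1$ and $\cc{O}$ is an $\omega$-diblockage, then the first and last bags force the initial and terminal leaf separations into $\cc{P}_\omega^+\subseteq\cc{O}^+$ and $\cc{P}_\omega^-\subseteq\cc{O}^-$ respectively, so walking along the sequence of separations yields a consecutive pair $(A_i,B_i)\in\cc{O}^+$ and $(A_{i+1},B_{i+1})\in\cc{O}^-$; the third diblockage axiom then forces $|V_{i+1}|=|B_i\cap A_{i+1}|\geq\omega$, contradicting the width bound.

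For the converse I plan to prove, by induction on $|\ra{S}_{\cc{P}}|$, the stronger statement that every consistent partial orientation $\cc{P}$ of $\ra{S}_k$ extending $\cc{P}_\omega$ either extends to an $\omega$-diblockage or admits an $(\omega,\cc{P})$-admissible $\ra{S}_k$-path; taking $\cc{P}=\cc{P}_\omega$ recovers the theorem. The base case is when $\cc{P}$ is already an orientation: if it is not a diblockage then its third axiom must fail, giving $(A,B)\in\cc{P}^+$ and $(C,D)\in\cc{P}^-$ with $(A,B)\leq(C,D)$ and $|B\cap C|<\omega$, whence the three-vertex $\ra{S}_k$-path with edge labels $(A,B),(C,D)$ is $(\omega,\cc{P})$-admissible since its only inner bag is $B\cap C$.

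For the inductive step, I pick any $(X,Y)\in\ra{S}_{\cc{P}}$ and form $\cc{P}'$ from $\cc{P}$ by adding every separation of $\ra{S}_k$ below $(X,Y)$ to $\cc{P}^+$; the consistency of $\cc{P}$ forbids any such separation from already lying in $\cc{P}^-$, so $\cc{P}'$ is a consistent partial orientation extending $\cc{P}_\omega$ with $|\ra{S}_{\cc{P}'}|<|\ra{S}_{\cc{P}}|$, and dually I define $\cc{P}''$. By induction, if either yields a diblockage I am done; otherwise I obtain an $(\omega,\cc{P}')$-admissible path $(P_1,\alpha_1)$ and an $(\omega,\cc{P}'')$-admissible path $(P_2,\alpha_2)$, each of which is already $(\omega,\cc{P})$-admissible unless its relevant leaf separation lies among those newly added, so I may assume $(A^1_1,B^1_1)\leq(X,Y)\leq(A^2_{n_2-1},B^2_{n_2-1})$.

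I then pick $(X^*,Y^*)$ of minimum order in the interval $\bigl[(A^1_1,B^1_1),(A^2_{n_2-1},B^2_{n_2-1})\bigr]$; a sub-interval argument shows $(X^*,Y^*)$ is up-linked to $(A^1_1,B^1_1)$ and down-linked to $(A^2_{n_2-1},B^2_{n_2-1})$. Applying the up-shift of $(P_1,\alpha_1)$ and the down-shift of $(P_2,\alpha_2)$ onto $(X^*,Y^*)$ at these leaves, Lemmas~\ref{l:shift1} and~\ref{l:bags} keep both paths in $\ra{S}_k$ and bound every bag away from the shifted leaves by the original ones, hence by $\omega-1$. Concatenating the two shifted paths at their shared separation $(X^*,Y^*)$ produces a joining bag of size $|X^*\cap Y^*|<k\leq\omega$, and the new leaf separations remain in $\cc{P}^\pm\cup\cc{P}_\omega^\pm$ by consistency of $\cc{P}$ and the trivial bound $|A\cap X^*|\leq|A|$, yielding the $(\omega,\cc{P})$-admissible path. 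The main obstacle is this final step: the argument rests entirely on Lemma~\ref{l:bags} transporting bag-size control through both shifts via the submodularity identity~(\ref{e:sub}), and the flexibility of admissibility at the leaves is precisely what keeps the shifted leaves compliant with $\cc{P}$.
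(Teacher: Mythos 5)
Your proof is correct and follows essentially the same strategy as the paper's: both directions are handled the same way, and the substantive direction is proved by induction on $|\ra{S}_{\cc{P}}|$ using admissibility, the up/down-shift machinery of Lemmas~\ref{l:shift1} and~\ref{l:bags}, and concatenation at a $\lambda$-minimal separation. The only difference is cosmetic: in the inductive step the paper adds a single $\leq$-minimal (resp.\ $\leq$-maximal) element of $\ra{S}_{\cc{P}}$ to $\cc{P}^+$ (resp.\ $\cc{P}^-$), which pins down the new leaf separations exactly, whereas you add the whole down-set (resp.\ up-set) of an arbitrary $(X,Y)\in\ra{S}_{\cc{P}}$; both yield consistent partial orientations with strictly smaller $\ra{S}_{\cc{P}}$, and your extra step of observing that the interval $\bigl[(A^1_1,B^1_1),(A^2_{n_2-1},B^2_{n_2-1})\bigr]$ is nonempty because it contains $(X,Y)$ recovers what the paper gets for free.
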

\begin{proof}
We will instead prove a stronger statement. We claim that for every consistent partial orientation $\cc{P}$ of $\ra{S}_k$ either there exists an $(\omega,\cc{P})$-admissable $\ra{S}_k$-path, or there is an $\omega$-diblockage of $\ra{S}_k$ extending $\cc{P}$. Note that, $\cc{P}_\omega$ is consistent, and an $(\omega,\cc{P}_\omega)$-admissable $\ra{S}_k$-path is an $\ra{S}_k$-path of width $<\omega-1$.

Let us first show that both cannot happen. Suppose for contradiction that there exists an $(\omega,\cc{P})$-admissable $\ra{S}_k$-path $(P,\alpha)$ with $V(P) = \{t_1,\ldots,t_n)$, $\alpha(t_i,t_{i+1}) = (A_i,B_i)$ for each $i$ and an $\omega$-diblockage $\cc{O}$ of $\ra{S}_k$ extending $\cc{P}$.

Since $\cc{O}$ extends $\cc{P}$, and $(P,\alpha)$ is $(\omega,\cc{P})$-admissable, it follows that $(A_1,B_1) \in \cc{P}^+ \cup \cc{P}_\omega^+ \subseteq \cc{O}^+$ and $(A_{n-1},B_{n-1}) \in \cc{P}^- \cup \cc{P}_\omega^- \subseteq \cc{O}^-$. Let $j = \max \{t \, : (A_t,B_t) \in \cc{O}^+ \}$. By the previous statement, $1 \leq j < n-1$, and so, since $\cc{O}$ is an orientation of $\ra{S}_k$, $(A_{j+1},B_{j+1}) \in \cc{O}^-$. However, $|A_{j+1} \cap B_j| < \omega$, contradicting the assumption that $\cc{O}$ is an $\omega$-diblockage of $\ra{S}_k$.

We will prove the statement by induction on $|\ra{S}_{\cc{P}}|$. We first note that we may assume that $\cc{P}_\omega^+ \subseteq \cc{P}^+$ and $\cc{P}_\omega^- \subseteq \cc{P}^-$. Indeed, if $\cc{P}_\omega^+ \not\subseteq \cc{P}^+$ then there is some separation $(A,B)$ with $|A| < \omega$ and $(A,B) \in \cc{P}^-$. However, we can then form an $(\omega,\cc{P})$-admissable $\ra{S}_k$-path as follows: Let $P=\{v_1,v_2\}$ and let $\alpha(v_1,v_2) = (A,B)$. It is a simple check that $(P,\alpha)$ is an $(\omega,\cc{P})$-admissable $\ra{S}_k$-path. A similar argument holds if $\cc{P}_\omega^- \not\subseteq \cc{P}^-$.

Suppose that $|\ra{S}_{\cc{P}}|= 0$, in which case $\cc{P}$ is a consistent orientation of $\ra{S}_k$. Suppose that $\cc{P}$ is not an $\omega$-diblockage. Since without loss of generality $\cc{P}_\omega^+ \subseteq \cc{P}^+$ and $\cc{P}_\omega^- \subseteq \cc{P}^-$ there must exist a pair $(A,B) \leq (C,D)$ with $(A,B) \in \cc{P}^+$, $(C,D) \in \cc{P}^-$ and $|B \cap C| < \omega$. Then, we can then form an $(\omega,\cc{P})$-admissable $\ra{S}_k$-path as follows: Let $P=\{v_1,v_2,v_3\}$ and let $\alpha(v_1,v_2) = (A,B)$ and $\alpha(v_2,v_3) = (C,D)$. It is a simple check that $(P,\alpha)$ is a $\cc{P}$-admissable $\ra{S}_k$-tree.

So, let us suppose that $|\ra{S}_{\cc{P}}| > 0$, and that there is no $\omega$-diblockage $\cc{O}$ of $\ra{S}_k$ extending $\cc{P}$. There exists some separation $(A,B) \in \ra{S}_k \setminus  (\cc{P}^+ \cup \cc{P}^-)$. Let us choose $(C,D) \leq (A,B)$ minimal with $(C,D) \in \ra{S}_k \setminus  (\cc{P}^+ \cup \cc{P}^-)$ and $(A,B) \leq (E,F)$ maximal with $(E,F) \in \ra{S}_k \setminus  (\cc{P}^+ \cup \cc{P}^-)$.

We claim that $\cc{P}_1 = (\cc{P}^+ \cup (C,D), \cc{P}^-)$ and $\cc{P}_2 = (\cc{P}^+, \cc{P}^- \cup (E,F))$ are both consistent partial orientations of $\ra{S}_k$. Indeed, by minimality of $(C,D)$ every separation $(U,V) < (C,D)$ is in $\cc{P}^+ \cup \cc{P}^-$ and since $(U,V) < (C,D) \not\in \cc{P}^+ \cup \cc{P}^-$ by the consistency of $\cc{P}$ it follows that $(U,V) \in \cc{P}^+$. Similarly $(U,V) \in \cc{P}^-$ for all $(E,F) < (U,V)$. Hence both $\cc{P}_1$ and $\cc{P}_2$ are consistent partial orientations of $\ra{S}_k$. Furthermore $|\ra{S}_{\cc{P}_1}|,|\ra{S}_{\cc{P}_2}| < |\ra{S}_{\cc{P}}|$. Therefore, we can apply the induction hypothesis to both $\cc{P}_1$ and $\cc{P}_2$.

Since an $\omega$-diblockage of $\ra{S}_k$ extending $\cc{P}_1$ or $\cc{P}_2$ also extends $\cc{P}$, we may assume that there exists an $(\omega,\cc{P}_1)$-admissable $\ra{S}_k$-path $(P_1,\alpha_1)$ and an $(\omega,\cc{P}_2)$-admissable  $\ra{S}_k$-path $(P_2,\alpha_2)$. Furthermore, we may assume that they are not $(\omega,\cc{P})$-admissable, and so the initial leaf separation of $(P_1,\alpha_1)$ is $(C,D)$ and the terminal leaf separation of $(P_2,\alpha_2)$ is $(E,F)$. Let us pick a separation $(C,D) \leq (X,Y) \leq (E,F)$ such that $|X,Y| = \lambda\big( (C,D),(E,F)\big)$. Note that, $(X,Y)$ is up-linked to $(C,D)$ and down-linked to $(E,F)$.

Let $(P'_1,\alpha'_1)$ be the up-shift of $(P_1,\alpha_1)$ onto $(X,Y)$ and let $(P'_2,\alpha'_2)$ be the down-shift of $(P_2,\alpha_2)$ onto $(X,Y)$. Note that the initial leaf separation of $(P'_1,\alpha'_1)$ and the terminal leaf separation of $(P'_2,\alpha'_2)$ are both $(X,Y)$. We form $(\hat{P}, \hat{\alpha})$ by taking  $\hat{P}$ to be the path formed by identifying the terminal leaf of $P'_2$ with the initial leaf of $P'_1$, with $\hat{\alpha}$ defined to be $\alpha'_1$ on $P'_1$ and $\alpha'_2$ on $P'_2$. We claim that $(\hat{P}, \hat{\alpha})$ is a $\cc{P}$-admissable $\ra{S}_k$ path. Let us write $V(\hat{P}) = \{\hat{t}_1,\hat{t}_2, \ldots, \hat{t}_{\hat{n}}\}$, with $\hat{\alpha}(\hat{t}_j,\hat{t}_{j+1}) = (\hat{A}_j,\hat{B}_j)$ for each $j$ and $\hat{B}_0 = \hat{A}_{\hat{n}} =  V(D)$.

For each $j \neq 1,\hat{n}$, the bag $\hat{A}_j \cap \hat{B}_{j-1}$ is the shift of some non-leaf bag in $(P_1,\alpha_1)$ or $(P_2,\alpha_2)$. Since these were $(\omega,\cc{P}_1)$ and $(\omega,\cc{P}_2)$-admissable respectively, the size of the bag was less than $\omega$. Therefore, since $(X,Y)$ is up-linked to $(C,D)$ and down-linked to $(E,F)$, by Lemma \ref{l:bags} $|\hat{A}_j \cap \hat{B}_{j-1}| \leq \omega$.

Finally, consider the separations $(\hat{A}_1,\hat{B}_1)$ and $(\hat{A}_{\hat{n}-1},\hat{B}_{\hat{n}-1})$. If we denote by $(U,V)$ the initial leaf separation in $(P_2,\alpha_2)$ then $(U,V) \in \cc{P}^+ \cup \cc{P}_0^+$, since $(P_2,\alpha_2)$ is $(\omega,\cc{P}_2)$-admissable. Then, $(\hat{A}_1,\hat{B}_1) = (U,V) \wedge (X,Y) \leq (U,V)$. We note that, since $\cc{P}$ is consistent, $\cc{P}^+$ is down-closed, as is $\cc{P}_0^+$ by inspection, and so it follows that  $(\hat{A}_1,\hat{B}_1) \in \cc{P}^+ \cup \cc{P}_0^+$. A similar argument shows that $(\hat{A}_{\hat{n}-1},\hat{B}_{\hat{n}-1}) \in \cc{P}^- \cup \cc{P}_0^-$.

\end{proof}

\section{Finding an arborescence as a butterfly minor}\label{s:arb}
In directed graphs, it is not clear what the best way to generalise the minor operation from undirected graphs. One suggestion (see for example Johnson, Robertson and Seymour \cite{JRST01}), is that of \emph{butterfly minors}. We say an edge $e = (u,v)$ in a digraph $D$ is \emph{contractible} if either $d^-(v) = 1$ or $d^+(u) = 1$ where $d^-$ and $d^+$ are the in- and out-degree respectively. We say a digraph $D'$ is a butterfly minor of $D$, which we write $D' \preceq D$, if $D'$ can be obtained from $D$ by a sequence of vertex deletions, edge deletions and contractions of contractible edges.

We say an $\ra{S}$-path $(P,\alpha)$ with $V(P) = \{t_1,t_2,\ldots, t_n\}$ and $\alpha(t_i,t_{i+1}) = (A_i,B_i)$ is a \emph{partial $\ra{S}$-path of width $< k$} if for each $ 2 \leq i \leq n-1$, $\omega_i := |A_i \cap B_{i-1}| \leq k$ and $\omega_n = |B_{n-1}| \leq k$. That is, it is a path-decomposition in which each bag except the first has size $\leq k$. Note that a partial $\ra{S}$-path of width $< k$ is necessarily an $\ra{S}_{k+1}$-path.

Let $\ra{S}'_{k+1} \subset \ra{S}$ be the set of $(A,B)$ such that $(A,B) = \alpha(t_1,t_2)$ for some partial $\ra{S}$-path of width $< k$. Note that, since the first bag in a partial $\ra{S}$-path of width $< k$ has size at most $k$, it follows that $|A,B| \leq k$ and hence $\ra{S}'_{k+1} \subset \ra{S}_{k+1}$.

\arb*
\begin{proof}
We may assume without loss of generality that $F$ is in fact an arborescence. Therefore, every vertex in $F$ apart from the root $v_0$ has exactly one in-neighbour, and there is some ordering of the vertices $V(F) = v_0,v_2,\ldots,v_n$ such that every $v_i$ has no in-neighbours in $\{v_{i+1}, \ldots, v_n \}$. Furthermore, without loss of generality we may assume that the digraph $D$ is weakly connected.

Let us define $(C_0,D_0)$ to be a $\leq$-minimal separation such that:
\begin{itemize}
\item $|C_0,D_0| = 0$;
\item $(C_0,D_0) \in \ra{S}'_{n+1}$.
\end{itemize}
Note that, since $(P,\alpha)$ with $V(P)=\{t_1,t_2\}$ and $\alpha(t_1,t_2) = (V,\emptyset)$ is a partial $\ra{S}$-path of width $< n$, at least once such separation exists. Let $x^0_0 \in C_0 \setminus D_0$, which is non-empty as dpw$(D) \geq n$.

We shall construct inductively $x^i_i$ and $(C_{i},D_{i})$ for $1 \leq i \leq n$ where $(C_i,D_i)$ is a $\leq$-minimal separation satisfying the following properties:
\begin{itemize}
\item $(C_i,D_i) \leq (C_{i-1},D_{i-1} \cup \{ x^{i-1}_{i-1}\}) \leq (C_{i-1},D_{i-1})$;
\item $|C_i,D_i| = i$;
\item $(C_i,D_i) \in \ra{S}'_{n+1}$.
\end{itemize}
Furthermore, we can label $C_i \cap D_i = \{ x^i_0, \ldots, x^i_{i-1} \}$ such that there exist vertex disjoint $x^{i-1}_j - x^i_j$ paths $P^i_j$ for each $j \leq i-1$. Finally, there is some $x^i_i \in C_i \setminus D_i$ such that if $v_i \in F$ has an in-neighbour $v_{k(i)}$ then there is an edge $(x^i_{k(i)},x^i_i) \in E(D)$.

Suppose we have constructed  $x^{i-1}_{i-1}$ and $(C_{i-1},D_{i-1})$. Since $(C_{i-1},D_{i-1}) \in \ra{S}'_{n+1}$ there is a partial $\ra{S}$-path $(P,\alpha)$ of width $< n$ such that $\alpha(t_1,t_2) = (C_{i-1},D_{i-1})$. Let $V(P) = \{ t_1, \ldots, t_m\}$, then we can form a partial $\ra{S}$-path $(P',\alpha')$ by letting $P' = \{t_0,t_1, \ldots, t_m\}$, $\alpha'(t_0,t_1) = (C_{i-1},D_{i-1} \cup \{ x^{i-1}_{i-1}\})$ and $\alpha' = \alpha$ on $P$.

Since $i < n$, it is clear that this is a partial $\ra{S}$-path of width $<n$, and so $(C_{i-1},D_{i-1} \cup \{ x^{i-1}_{i-1}\}) \in \ra{S}'_{n+1}$. Therefore, the set of separations satisfying the properties is non-empty, and so there is some $\leq$-minimal element $(C_i,D_i)$ which satisfies the three properties.

We claim that $\lambda((C_i,D_i),(C_{i-1},D_{i-1} \cup \{ x^{i-1}_{i-1}\})) = i$. Indeed, suppose for contradiction there exists $(C_i,D_i) < (X,Y) < (C_{i-1},D_{i-1} \cup \{ x^{i-1}_{i-1}\})$ with $|X,Y| = \lambda((C_i,D_i),(C_{i-1},D_{i-1} \cup \{ x^{i-1}_{i-1}\})) < i$. Note that, since $|C_i,D_i| = |C_{i-1},D_{i-1} \cup \{ x^{i-1}_{i-1}\}| = i$, the inequalities are strict.

By assumption, there is a partial $\ra{S}$-path $(P,\alpha)$ with of width $< n$ with initial leaf separation $(C_i,D_i)$. By construction $(X,Y)$ is up-linked to $(C_i,D_i)$, and so by Lemma \ref{l:bags} the up-shift of $(P,\alpha)$ onto $(X,Y)$ with respect to $(C_i,D_i)$ is a partial $\ra{S}$-path of width $< n$ with initial leaf separation $(X,Y)$. Hence $(X,Y) \in \ra{S}'_{n+1}$.

However, $|X,Y|:= \ell < i$ and $(X,Y) < (C_{i-1},D_{i-1}) \leq (C_\ell,D_\ell)$, contradicting the minimality of $(C_\ell,D_\ell)$. Therefore $\lambda((C_i,D_i),(C_{i-1},D_{i-1} \cup \{ x^{i-1}_{i-1}\})) = i$, and so by Menger's theorem there exists a family of vertex disjoint $C_i \cap D_i$ to $C_{i-1} \cap (D_{i-1} \cup \{ x^{i-1}_{i-1}\})$ paths. Let us label the vertices of $C_i \cap D_i =  \{ x^i_0, \ldots, x^i_{i-1} \}$ such that these paths are from $x^{i-1}_j$ to $x^i_j$. 

We claim that every $x^i_j$ with $j \leq i-1$ has an out-neighbour in $C_i \setminus D_i$. Indeed, suppose $x^i_j$ does not, then $(C_i \setminus \{x^i_j\},D_i) \in \ra{S}$. There exists a partial $\ra{S}$-path $(P,\alpha)$ with $V(P) = \{ t_1, \ldots, t_m \}$ of width $< n$ with initial leaf separation $(C_i,D_i)$ and so if we consider $(P,\alpha')$ with $\alpha'(t_1,t_2) = (C_i \setminus \{x^i_j\},D_i)$ and $\alpha = \alpha'$ on $P[\{t_2,\ldots,t_m\}]$, we see that $(P,\alpha')$ is also a partial $\ra{S}$-path $(P,\alpha)$ of width $<n$, with initial leaf separation $(C_i \setminus \{x^i_j\},D_i)$.

Therefore, $(C_i,D_i) > (C_i \setminus \{x^i_j\},D_i) \in \ra{S}'_{n+1}$, contradicting the minimality of $(C_{i-1},D_{i-1})$. Hence, if $v_{k(i)}$ is the in-neighbour of $v_i$, then we can pick $x^i_i \in C_i \setminus D_i$ such that there is an edge $(x^i_{k(i)},x^i_i) \in E(D)$.

For each $0 \leq j \leq n-1$ let $P_j = \bigcup_{i=j}^n P^i_j$. then $P_j$ is an $x^j_j$ to $x^n_j$ path containing $x^i_j$ for all $j \leq i \leq n-1$. Consider the subgraph of $D$ given by
\[
D' = \bigcup_{j=0}^n P_j \cup \{ (x^i_{k(i)}, x^i_i \,: \, 1 \leq i \leq n \}
\]

Note that $D'$ is an arborescence, and so each vertex has at most $1$ in-neighbour. Hence every edge is contractible, and by contracting each $P_i$ we obtain $F$ as a butterfly minor.
\end{proof}

One of the strengths of Theorem \ref{t:min} is that there exist forests with unbounded path-width, and so the theorem gives a family graphs that must appear as a minor of a graph with sufficiently large path-width, and conversely cannot appear as a minor of a graph with small path-width. 

Unfortunately, there do not exist arborescences of unbounded directed path-width. Indeed, since the vertices of an arborescence can be linearly ordered such that no edge goes `backwards', every arborescence has directed path-width $0$, and moreso this observation is even true for all directed acyclic graphs. It would be interesting to know if these methods could be used to prove a theorem similar to Theorem \ref{t:arborescence} for a class of graphs whose directed path-width is unbounded.

\section{Linked directed path-decompositions}\label{s:linked}
A directed graph $D$ is \emph{simple} if it is loopless and there is at most one edge $(u,v)$ for every $u,v \in V(D)$. A simple directed graph is \emph{semi-complete} if for every pair $u,v  \in V(D)$ either $(u,v) \in E(D)$ or $(v,u) \in E(D)$. A semi-complete directed graph is a \emph{tournament} if exactly one of $(u,v)$ and $(v,u)$ is an edge. Seymour and Kim \cite{KS15} considered the following property of a directed path-decomposition $(P,\cc{V})$:
\begin{equation}\label{e:linked}
\begin{split}
\text{If  } |V_k| \geq t &\text{ for every } i \leq k \leq j \text{, then there exists a collection of } t \\
&\text{ vertex-disjoint directed paths from } V_j \text{ to } V_i. 
\end{split}
\end{equation}

\begin{theorem}[Seymour and Kim]\label{t:SK}
Let $D=(V,E)$ be a semi-complete directed graph. $D$ has a directed path-decomposition $(P,\cc{V})$ of width dpw$(D)$ satisfying (\ref{e:linked}).
\end{theorem}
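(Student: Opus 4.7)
The plan is to apply the shifting machinery of Section \ref{s:path} to an extremal minimum-width directed path-decomposition and show it must be linked. I would start by translating the decomposition into an $\ra{S}$-path $(P, \alpha)$ with separations $\sigma_k := \alpha(t_k, t_{k+1}) = (A_k, B_k)$ and bags $V_k = A_k \cap B_{k-1}$. By the directed Menger theorem, condition (\ref{e:linked}) is equivalent to the following purely separation-theoretic statement: for every pair $i \leq j$, every separation $(X, Y)$ with $\sigma_{i-1} \leq (X, Y) \leq \sigma_j$ satisfies $|X, Y| \geq \min_{i \leq k \leq j} |V_k|$.

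Among all directed path-decompositions of $D$ of width $w = \mathrm{dpw}(D)$, pick one $(P, \alpha)$ which is extremal with respect to a suitable secondary measure --- for instance, the one whose bag-size sequence $(|V_1|, \ldots, |V_n|)$, sorted in non-increasing order, is lexicographically minimum. I claim any such extremal $(P, \alpha)$ is linked. Supposing not, there exist indices $i \leq j$ and a separation $(X, Y)$ with $\sigma_{i-1} \leq (X, Y) \leq \sigma_j$ and $|X, Y| < t := \min_{i \leq k \leq j} |V_k|$. Choose such an $(X, Y)$ of minimum order $\lambda$, so that $(X, Y)$ is both up-linked to $\sigma_{i-1}$ and down-linked to $\sigma_j$.

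Now construct a new $\ra{S}_{w+1}$-path $(\hat P, \hat\alpha)$ that uses $(X, Y)$ in place of the large bags in $[i, j]$. Concretely, combine the down-shift of the prefix ending at $\sigma_j$ with the up-shift of the suffix starting at $\sigma_{i-1}$, each onto $(X, Y)$, by identifying their common leaf labelled $(X, Y)$. By Lemma \ref{l:bags}, every bag in both shifted parts is weakly smaller than the corresponding original bag, while the new bag created at the identification has size $\lambda$, strictly smaller than any original $|V_k|$ with $k \in [i, j]$. With careful bookkeeping --- in particular using the submodularity relation (\ref{e:sub}) to compare bag sequences --- this gives a path-decomposition of width at most $w$ whose bag-size sequence is strictly lex-smaller than that of $(P, \alpha)$, contradicting extremality.

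The central difficulty is this bookkeeping: Lemma \ref{l:bags} delivers only weak inequalities on the bags in the shifted regions, so one must select the extremal measure precisely enough that the single strict inequality from the $\lambda$-sized new bag forces a strict decrease in the sequence, and one must verify that the concatenated path remains $\leq$-monotone with all bags of size at most $w+1$. This is the directed analogue of the argument Bellenbaum and Diestel use in the undirected tree case. Notably, semi-completeness of $D$ is never used in the sketch, which is exactly why the same approach should extend to arbitrary digraphs as stated in Theorem \ref{t:mainlinked}.
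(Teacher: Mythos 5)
You have correctly identified both the overall strategy (an extremal $\ra{S}$-path, shifting at a small $(X,Y)$ via Lemma~\ref{l:bags}, and gluing a down-shift onto an up-shift) and the fact that semi-completeness is irrelevant; indeed the paper does not prove Theorem~\ref{t:SK} directly but cites Kim--Seymour and instead proves the more general unnumbered theorem at the end of Section~\ref{s:linked}, stated for the adhesion orders $|A_k,B_k|$ rather than the bag sizes $|V_k|$. Where your proposal breaks is precisely at the step you flag as ``the central difficulty'': the lexicographically-minimal sorted bag-size sequence does not in fact decrease. When $i<j$ the glued path $\hat{P}$ has $n+j-i+1$ bags while $P$ has $n$, since the overlap $[i+1,j]$ appears once in the down-shifted prefix and once in the up-shifted suffix. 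Lemma~\ref{l:bags} only gives $\omega''_\ell\le\omega_\ell$ and $\omega'_\ell\le\omega_\ell$, while submodularity~(\ref{e:sub}) gives $\omega''_\ell+\omega'_\ell=\omega_\ell+\lambda$; so nothing prevents, say, the down-shift from keeping every overlap bag at its original size while the up-shift drops them all to $\lambda$. In that case the bag multiset of $\hat{P}$ is the old one together with extra bags of size $\le\lambda$, and the sorted sequence of $\hat{P}$ is lexicographically \emph{larger} than that of $P$ under any reasonable convention for unequal lengths --- so no contradiction.

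The paper's measure is tailored to this problem: for each $r$ descending it compares the pair $(e(P_r),c(P_r))$, the number of edges of order $\ge r$ and the number of components of the subforest they induce, with fewer edges preferred and (as a tiebreaker) \emph{more} components preferred. Claim~\ref{c:two} is then proved by a downward induction on $r$ using minimality, showing that for each original edge of order $r>|X,Y|$ exactly one of the two shifted copies retains order $r$ and the other drops to $\le|X,Y|$, so that $e(\hat P_r)=e(P_r)$; the contradiction only emerges via the component count, because the junction edge carrying $(X,Y)$ disconnects in $\hat P_{\lambda+1}$ the images of $e_i$ and $e_j$, which by hypothesis lie in one component of $P_{\lambda+1}$, violating the second half of Claim~\ref{c:two}. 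Without that second, component-counting coordinate, knowing $e(\hat P_r)\le e(P_r)$ for all $r>\lambda$ (or the bag analogue) is not enough, so the single-coordinate measure you propose cannot close the argument. A smaller issue is that your Menger reformulation with $\sigma_{i-1}\le(X,Y)\le\sigma_j$ and $\min_{i\le k\le j}|V_k|$ needs care both with the exact indices and with the converse direction (squeezing an arbitrary vertex separator of $V_j$ from $V_i$ into a nested directed separation); the paper sidesteps this by defining linkedness directly via $\lambda\big((A_i,B_i),(A_j,B_j)\big)$ on the adhesion sets and only afterwards passing to~(\ref{e:linked}) by subdividing.
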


Seymour and Kim called directed path-decompositions satisfying (\ref{e:linked}), as well as two other technical conditions, `linked'. However, when thinking about directed path-decompositions in terms of separations, perhaps a more natural concept to call `linked' is the following (See \cite{E17}). We say a directed path-decomposition $(P,\cc{V})$ with $P = \{t_1,\ldots, t_n\}$ and $\alpha(t_i,t_{i+1}) = (A_i,B_i)$ is \emph{linked} if for every $1\leq i < j \leq n-1$
\[
\min \{|A_k,B_k| \, : \, i \leq k \leq j \} = \lambda\big( (A_i,B_i),(A_j,B_j) \big).
\]

It is easy to check that, if $(P,\alpha)$ is a linked directed path-decomposition then we can form a directed path-decomposition satisfying (\ref{e:linked}) without increasing the width in the following way. Let $P' = \{t_1,s_1,t_2,s_2, \ldots, t_{n-1},s_{n-1},t_n \}$ be the path formed by subdividing each edge in $P$ and let $\alpha'(t_i,s_i) = \alpha'(s_i,t_{i+1}) = \alpha(t_i,t_{i+1})$. In terms of the bags of the path-decomposition, this corresponds to adding the adhesion sets of the original path-decomposition as bags.

Given a directed path decomposition $(P,\alpha)$ and $r \in \mathbb{N}$ let us write $P_r$ for the linear sub-forest of $P$ induced by the edges $e \in E(P)$ such that $|\alpha(e)| \geq r$. Let us denote by $e(P_r)$ and $c(P_r)$ the number of edges and components of $P_r$ respectively.

Again we will prove a slightly more general theorem about directed path-decompositions where we fix independently the size of the adhesion sets. Theorem \ref{t:mainlinked} will follow from the following theorem if we let $\omega=k$. We note that a stronger result, which would imply Theorem \ref{t:mainlinked}, is claimed in a preprint of Kintali \cite{K14}. However we were unable to verify the proof, and include a counterexample to his claim in the appendix.
\begin{theorem}
Let $D=(V,E)$ be a directed graph and let $k \leq \omega \in \mathbb{N}$ be such that there exists an $\ra{S}_k$-path of width $< \omega - 1$. There exists a linked $\ra{S}_k$-path of width $< \omega - 1$.
\end{theorem}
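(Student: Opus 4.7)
The plan is to adapt the minimum-counterexample strategy of Thomas and Bellenbaum--Diestel for linked tree-decompositions (as directly transplanted into our directed path setting by Lemmas \ref{l:shift1} and \ref{l:bags}). Among all $\ra{S}_k$-paths of width $<\omega-1$---which is non-empty by hypothesis---I would choose one $(P,\alpha)$ lexicographically minimising the tuple
\[
\mu(P,\alpha)\;:=\;\bigl(e(P_{k-1}),\,c(P_{k-1}),\,e(P_{k-2}),\,c(P_{k-2}),\,\ldots,\,e(P_1),\,c(P_1)\bigr).
\]
This tuple lives in $\mathbb{N}^{2(k-1)}$, which is well-ordered under lex, so a minimiser exists, and I will show that any such minimiser is linked.

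Suppose for contradiction $(P,\alpha)$ is not linked, so that there are $1\leq i<j\leq n-1$ with $r:=\min\{|A_l,B_l|:i\leq l\leq j\} \;>\; \lambda:=\lambda\bigl((A_i,B_i),(A_j,B_j)\bigr)$. Pick $(X,Y)$ with $(A_i,B_i)\leq(X,Y)\leq(A_j,B_j)$ and $|X,Y|=\lambda$; since the minimum is achieved over the whole interval, $(X,Y)$ is automatically both up-linked to $(A_i,B_i)$ and down-linked to $(A_j,B_j)$. I would then build $(\hat P,\hat\alpha)$ by surgery on the segment $[i,j]$: either down-shift the initial segment $(P,\alpha)[t_1,\ldots,t_{j+1}]$ onto $(X,Y)$ with respect to $(A_j,B_j)$ and re-attach the unchanged terminal segment at $t_{j+1}$, or dually up-shift the terminal segment onto $(X,Y)$ with respect to $(A_i,B_i)$ and re-attach the initial segment at $t_i$. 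The $\leq$-monotonicity of separations along $P$ together with $(A_i,B_i)\leq(X,Y)\leq(A_j,B_j)$ makes the result a well-defined $\ra{S}$-path, Lemma \ref{l:shift1} keeps all edges in $\ra{S}_k$, and Lemma \ref{l:bags} bounds each shifted bag by the corresponding original bag. To force $\mu$ to drop, I want the edge at position $j$ (respectively $i$) to acquire label $(X,Y)$ and hence order $\lambda<r\leq|A_j,B_j|$, while no other edge order strictly increases; that last fact follows from applying the identity (\ref{e:sub}) to each shifted edge and using that $(X,Y)$ realises $\lambda$. Consequently the largest coordinate at which $\mu(\hat P,\hat\alpha)$ and $\mu(P,\alpha)$ differ witnesses a strict decrease, contradicting minimality.

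The main obstacle is verifying that the bag at the \emph{joint} vertex---the one where the shifted portion meets the unshifted portion or the bridge edge $(X,Y)$---stays at most $\omega-1$. A single-sided shift generically produces a joint bag of the form $X\cap B_{i-1}$ or $A_{j+1}\cap Y$, which need not be bounded by $\omega_i$ or $\omega_{j+1}$ on its own. To handle this I would choose $(X,Y)$ extremally within the lattice of minimum-order separations in $[(A_i,B_i),(A_j,B_j)]$ (that lattice structure is itself a consequence of (\ref{e:sub})); for the optimal choice, (\ref{e:sub}) applied to the pairs $(X,B_{i-1})$ with $(A_i,Y)$ (respectively $(A_{j+1},Y)$ with $(X,B_j)$) forces the joint bag size down to the original $\omega_i$ (resp.\ $\omega_{j+1}$). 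If no single-sided shift works for the extremal $(X,Y)$ one falls back on a symmetric two-sided surgery, down-shifting on the left and up-shifting on the right of a single inserted $(X,Y)$-bridge, and Lemma \ref{l:bags} together with (\ref{e:sub}) simultaneously bounds both joint bags. This is the directed analogue of the key step in the Bellenbaum--Diestel argument, and it is the only part of the proof that genuinely uses the full strength of the shifting lemma rather than just its monotonicity.
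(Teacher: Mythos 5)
Your plan---take a lexicographic minimiser over levels $r$, and when linkedness fails on the interval $[i,j]$ do a two-sided shift-and-glue at a minimum-order $(X,Y)\in[(A_i,B_i),(A_j,B_j)]$ controlled by Lemmas \ref{l:shift1}, \ref{l:bags} and (\ref{e:sub})---is the paper's plan. Your ``symmetric two-sided surgery'' fallback is precisely what the paper does from the outset, and it resolves your joint-bag worry immediately: the bag at the glued vertex is exactly $X\cap Y$, of size $\lambda<k\leq\omega$, so no extremal choice of $(X,Y)$ is needed. (The single-sided shift, by contrast, fails whenever $Y\supsetneq B_j$: one checks $A_{j+1}\cap Y=(A_{j+1}\cap B_j)\sqcup(Y\setminus B_j)$, which strictly contains the original bag; and the instance of (\ref{e:sub}) you invoke on $(A_{j+1},Y)$ and $(X,B_j)$ is unavailable since $X\cup B_j$ need not be $V$.)

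The genuine gap is the sign of $c$ in your potential. The paper's order declares $(P,\alpha)<(Q,\beta)$ when, at the largest level $r$ where they differ, either $e(P_r)<e(Q_r)$, or $e(P_r)=e(Q_r)$ and $c(P_r)>c(Q_r)$: fewer edges but \emph{more} components is \emph{smaller}. You minimise $c$ instead, and this direction is not interchangeable. After the glue the bridge carries order $\lambda$, so for $r>\lambda$ it drops out of $\hat P_r$ and separates the surviving high-order edges into two halves; thus $c(\hat P_r)$ can only increase when $e(\hat P_r)=e(P_r)$, which under your ordering may make $(\hat P,\hat\alpha)$ strictly \emph{larger} than $(P,\alpha)$, giving no contradiction at all. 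Relatedly, ``the largest coordinate of $\mu$ that changes strictly decreases'' does not quite hold: the surgery duplicates every $e_k$ with $i\leq k\leq j$, and (\ref{e:sub}) (Claim \ref{c:one}) only tells you the two copies' orders sum to $|A_k,B_k|+\lambda$, so a priori neither copy need keep the full order and edges may appear at intermediate levels. The paper circumvents this with a downward induction on $r$ (Claim \ref{c:two}): minimality at each level $r>\lambda$ forces $e(\hat P_r)=e(P_r)$ and $c(\hat P_r)=c(P_r)$, from which one gets that exactly one copy of each edge keeps the original order and that this choice is constant along each component of $P_r$. The contradiction then arrives at level $\lambda+1$: $e_i$ and $e_j$ lie in one component of $P_{\lambda+1}$, yet their surviving copies $e''_i$ and $e'_j$ sit on opposite sides of the bridge. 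You should reproduce that induction rather than rely on a one-shot comparison of $\mu$.
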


\begin{proof}
Let us define a partial order on the set of $\ra{S}_k$-paths of $D$ of width $< \omega - 1$ by letting $(P,\alpha) \leq (Q,\beta)$ if there is some $r$ such that,
\begin{itemize}
\item for all $r' > r$, $e(P_{r'}) = e(Q_{r'})$ and $c(P_{r'}) = c(Q_{r'})$;
\item either $e(P_r) < e(Q_r)$, or $e(P_r) = e(Q_r)$ and $c(P_r) > c(Q_r)$;
\end{itemize}

Since there exists at least one directed path-decomposition of $D$ of width $< \omega -1$, there is some minimal element in this partial order, $(P,\alpha)$ with $V(P) = \{t_1,\ldots, t_n\}$ and $\alpha(t_i,t_{i+1}) = (A_i,B_i)$. We claim that $(P,\alpha)$ is linked.

Suppose for contradiction that $(P,\alpha)$ is not linked. That is, there exists $1 \leq i < j \leq n-1$ such that 
\[
\lambda\big( (A_i,B_i), (A_j,B_j) \big) < \min \{|A_k,B_k| \, : \, i \leq k \leq j \}.
\]
We will construct another directed path-decomposition $(\hat{P},\hat{\alpha})$ of width $< \omega -1$ such that $(\hat{P},\hat{\alpha})<(P,\alpha)$.

Let us choose a separation $(A_i,B_i) \leq (X,Y) \leq (A_j,B_j)$ such that $|X,Y| = \lambda \big( (A_i,B_i), (A_j,B_j) \big)$. Note that $(X,Y)$ is up-linked to $(A_i,B_i)$ and down-linked to $(A_j,B_j)$

We form two new $\ra{S_k}$-paths $(P',\alpha')$ and $(P'',\alpha'')$ by taking the up-shift of $(P,\alpha)$ onto $(X,Y)$ with respect to $(A_i,B_i)$ and the down-shift of $(P,\alpha)$ onto $(X,Y)$ with respect to $(A_j,B_j)$. Let us denote by $(A'_i,B'_i), \ldots , (A'_{n-1},B'_{n-1})$ and $(A''_1,B''_1) ,\ldots, (A''_j,B''_j)$ for the images of $\alpha'$ and $\alpha''$. We note that the initial leaf separation of $(P',\alpha')$ and the terminal leaf separation of $(P'',\alpha'')$ are both $(X,Y)$.

We form a new $\ra{S}_k$-path $(\hat{P},\hat{\alpha})$ by letting $\hat{P}$ be the path formed by identifying the initial leaf of $(P',\alpha')$ with the terminal leaf of $(P'',\alpha'')$ and taking $\hat{\alpha}$ to be $\alpha'$ on $E(P')$ and $\alpha''$ on $E(P'')$.

By Lemma \ref{l:bags}, since $(P,\alpha)$ was of width $< \omega-1$, so is $(\hat{P},\hat{\alpha})$. We claim that $(\hat{P},\hat{\alpha}) < (P,\alpha)$. Given a vertex $t_k \in V(P)$ we will write $t'_k$ and $t''_k$ for the copy of $t_k$ in $P'$ or $P''$ respectively, and carry these labels over onto $\hat{P}$. Note that, not every vertex will appear in both $P'$ and $P''$.

\begin{claim}\label{c:one}
For every $k \in [n]$ if $|A_k,B_k| = |A'_k,B'_k|$ then $|A''_k,B''_k| \leq |X,Y|$. Similarly if $|A_k,B_k| = |A''_k,B''_k|$ then $|A'_k,B'_k| \leq |X,Y|$.

Here we are assuming for convenience that if $(A'_k,B'_k)$ or $(A''_k,B''_k)$ do not exist then their order is $0$.
\end{claim}
\begin{proof}[Proof of claim]
We may assume that $i \leq k \leq j$, since otherwise one of $(A'_k,B'_k)$ or $(A''_k,B''_k)$ has order $0$.

Since $(P',\alpha')$ was the up-shift of $(P,\alpha)$ onto $(X,Y)$ with respect to $(A_i,B_i)$, $(A'_k,B'_k) = (A_k,B_k) \vee (X,Y)= (A_k \cup X, B_k \cap Y)$. Similarly $(A''_k,B''_k) = (A_k,B_k) \wedge (X,Y)= (A_k \cap X, B_k \cup Y)$.

Hence, by (\ref{e:sub}),
\[
|A'_k,B'_k| + |A''_k,B''_k| = |A_k,B_k| + |X,Y|.
\]
\end{proof}

Let us write $e_k$ for the edge $(t_k,t_{k+1})\in E(P)$, and $e'_k$, $e''_k$ for the two copies of $e_k$ in $E(\hat{P})$ (when they exist).

\begin{claim}\label{c:two}
For every $r > |X,Y|$ and every $k \in [n-1]$ such that $|A_k,B_k| = r$ exactly one of $(A'_k,B'_k)$, $(A''_k,B''_k)$ has order $|A_k,B_k|$, and the other has order $\leq |X,Y|$. Furthermore, for each component $C$ of $T^r$, and $e_p,e_q \in C$ as above, then $|A'_p,B'_p| = |A_p,B_p|$ if and only if $|A'_q,B'_q| = |A_q,B_q|$, and similarly $|A''_p,B''_p| = |A_p,B_p|$ if and only if $|A''_q,B'_q| = |A_q,B_q|$.
\end{claim}
\begin{proof}[proof of Claim]
We will prove the claim by reverse induction on $r$, starting with $r$ being the order of the largest separation in $(P,\alpha)$. Note that, since $|A_i,B_i|,|A_j,B_j| >  |X,Y|$, it follows that $r > |X,Y|$.

Since $(P,\alpha)$ was minimal, $e(P_r) \leq e(\hat{P}_r)$. However, by Claim \ref{c:one}, for each $(A_k,B_k)$ with $|A_k,B_k| = r$, at most one of the two separations $(A'_k,B'_k)$ and $(A''_k,B''_k)$ have order $r$, and if it does, then the other has order $\leq |X,Y| < r$. Therefore it follows that $e(\hat{P}_r) \leq e(P_r)$, and so $e(P_r) = e(\hat{P}_r)$, and the first part of the claim follows.

By minimality of $(P,\alpha)$ again, it follows that $c(P_r) \geq c(\hat{P}_r)$. However, since the edge $e'_i = e''_j$ in $\hat{P}$ is mapped to the separation $(X,Y)$ by $\hat{\alpha}$, it follows from the first half of the claim that $c(\hat{P}_r) \geq c(P_r)$, and so $c(P_r)= c(\hat{P}_r)$, and the second part of the claim follows.

Suppose then that the claim holds for all $r' > r$. It follows that $e(P_{r'}) = e(\hat{P}_{r'})$ and $c(P_{r'})= c(\hat{P}_{r'})$ for all $r'>r$ and so, since $(P,\alpha)$ was minimal, $e(P_r) \leq e(\hat{P}_r)$.

However, since by Lemma \ref{l:shift1} the order of each separation does not increase when we shift an $\ra{S}_k$-tree, the only edges in $\hat{P}_r$ come from copies of edges in $P_{r'}$ with $r' \geq r$. If $r' > r$ then, by the induction hypothesis, these copies have order $r'$, or $\leq |X,Y|$. If $r'=r$ then, by Claim \ref{c:one} at most one of the two copies of the edge has order $r$, and if it does the other has order $\leq |X,Y|$. It follows that $e(\hat{P}_r) \leq e(P_r)$, and so $e(P_r) = e(\hat{P}_r)$, and the first part of the claim follows.

By minimality of $(P,\alpha)$ again, it follows that $c(P_r) \geq c(\hat{P}_r)$. However, since the edge $e'_i = e''_j$ in $\hat{P}_r$ is mapped to the separation $(X,Y)$ by $\hat{\alpha}$, it follows from the first half of the claim that $c(\hat{P}_r) \geq c(P_r)$, and so $c(P_r)= c(\hat{P}_r)$, and the second part of the claim follows.
\end{proof}

Recall that $|A_k,B_k| > |X,Y|$ for all $i\leq k \leq j$ by assumption. Hence $e_i$ and $e_j$ lie in the same component of $P_{|X,Y| +1}$. However, $(A'_i,B'_i) = (A''_j,B''_j) = (X,Y)$, contradicting the second part of Claim \ref{c:two}.
\end{proof}

\appendix
\section{Lean directed path-decompositions}
Kintali \cite{K14} defines a directed path-decomposition to be \emph{lean} if it satisfies the following condition:
\begin{equation}\label{e:lean}
\begin{split}
&\text{Given $k > 0 $, $t_1\leq t_2 \in [n]$ and subsets $Z_1 \subset C_{t_1}$ and $Z_2 \subset C_{t_2}$ with $|Z_1|= |Z_2| = k$} \\
&\text{either $G$ contains $k$ vertex-disjoint directed paths from $Z_2$ to $Z_1$ or there exists}\\
&\text{$i \in [t_1,t_2-1]$ such that $|V_i \cap V_{i+1}| < k$ }
\end{split}
\end{equation}
Note, this is a strengthening of (\ref{e:linked}). In particular, (\ref{e:lean}) has content in the case $t_1= t_2$. When Thomas proved his result on the existence of linked tree-decompositions of minimal width \cite{T90} he in fact established the existence of tree-decompositions satisfying a stronger condition in the vein of (\ref{e:lean}) (which are sometimes called \emph{lean} tree-decompositions in the literature \cite{D00}). Kintali claims the following analogous result.

\begin{theorem}[\cite{K14} Theorem 7]
Every digraph $D$ has a directed path-decomposition of width dpw$(D)$ satisfying (\ref{e:lean}).
\end{theorem}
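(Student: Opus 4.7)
The plan is to lift Thomas's argument for lean tree-decompositions (which strengthens his argument for linked tree-decompositions just as (\ref{e:lean}) strengthens (\ref{e:linked})) to the directed path setting, using the shifting machinery developed in Section \ref{s:path}. Concretely, I would take a directed path-decomposition $(P,\alpha)$ of width $\text{dpw}(D)$ that is minimal in a refinement of the partial order used in the proof of Theorem \ref{t:mainlinked}: in addition to counting the edges and components of $P_r$ for each $r$, also record, for each $r$, the number of bags $V_i$ of size exactly $r$, ordered lexicographically from the largest $r$ downwards. The idea is that any failure of (\ref{e:lean}) witnessed by subsets of two (possibly equal) bags should translate into a strict improvement under this order.

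Next, I would assume that (\ref{e:lean}) fails, so that there exist $t_1 \leq t_2$, $Z_1 \subseteq V_{t_1}$ and $Z_2 \subseteq V_{t_2}$ with $|Z_1|=|Z_2|=k$, such that $|V_i \cap V_{i+1}| \geq k$ for every $i \in [t_1,t_2-1]$ and no $k$ pairwise vertex-disjoint directed paths run from $Z_2$ to $Z_1$. By the directed Menger theorem there is a $Z_2$--$Z_1$ vertex separator $S$ with $|S|<k$. I would then convert $S$ into a directed separation $(X,Y)$ of order $<k$ by letting $X$ consist of $S$ together with all vertices that can reach $Z_1$ in $D-S$, and $Y = (V\setminus X)\cup S$; a direct check gives $Z_1 \subseteq X$, $Z_2 \subseteq Y$, $X \cap Y = S$ and that no edge runs from $Y\setminus X$ to $X\setminus Y$, so $(X,Y)\in\ra{S}_k$.

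Using $(X,Y)$, I would up-shift the ``left half'' of $(P,\alpha)$ onto the join of $(X,Y)$ with $\alpha(t_{t_1-1},t_{t_1})$, down-shift the ``right half'' onto the meet of $(X,Y)$ with $\alpha(t_{t_2},t_{t_2+1})$, and splice the two resulting $\ra{S}$-paths along the new common leaf of order $<k$. Lemma \ref{l:bags}, applied on each side, preserves the width $\text{dpw}(D)$, while the new interior adhesion of size $<k$ replaces a stretch of adhesions of size $\geq k$; combined with the preservation of larger bag sizes this should yield a strict decrease in the chosen partial order, contradicting minimality.

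The main obstacle will be aligning the partial order with the shifting operation, for two intertwined reasons. First, the lean conclusion requires that the full \emph{multiset} of bag sizes not increase, whereas Lemma \ref{l:bags} only controls bags strictly past the shift point; so additional bookkeeping at the shifted leaves and at the splice is needed to avoid trading a large bag for a slightly smaller one that nevertheless grows the relevant lexicographic count. Second, and more seriously, the degenerate case $t_1=t_2$ puts the witnessing sets $Z_1,Z_2$ inside a single bag, so the stretch $[t_1,t_2-1]$ is empty and there is no adhesion of size $\geq k$ to destroy; here the argument would have to show directly that $|V_{t_1}|$ itself can be shrunk using $(X,Y)$, and this is the step where an analogue of Thomas's undirected proof is most precarious, since the undirected argument exploits a two-sided symmetry that a digraph need not possess.
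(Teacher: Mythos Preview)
The statement you are trying to prove is \emph{false}, and the paper does not prove it: immediately after quoting Kintali's claim, the paper gives a counterexample. Take the perfect binary tree $T_n$ of depth $n$ with every edge replaced by a pair of opposite arcs, call it $\ra{T}_n$. Then $\text{dpw}(\ra{T}_n)=\text{pw}(T_n)=\tfrac{n-1}{2}$, so any directed path-decomposition of $\ra{T}_n$ has a bag $V_t$ of size at least $\tfrac{n+1}{2}$. Applying (\ref{e:lean}) with $t_1=t_2=t$ forces $V_t$ to be well-linked in $\ra{T}_n$, hence well-linked in $T_n$; but a well-linked set of size $\tfrac{n+1}{2}$ would force $\text{tw}(T_n)\geq \tfrac{n+1}{6}$, while $\text{tw}(T_n)=1$. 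So for $n\geq 6$ no lean directed path-decomposition of width $\text{dpw}(\ra{T}_n)$ exists.

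Your outline therefore cannot be completed, and the gap is exactly where you flagged it as ``precarious'': the case $t_1=t_2$. Thomas's lean argument for tree-decompositions, when it meets a single bag $V_t$ that is not well-linked, uses the small separator $(X,Y)$ to replace the node $t$ by \emph{two} adjacent nodes whose bags are (roughly) $V_t\cap X$ and $V_t\cap Y$, and then reattaches the rest of the tree on either side. Crucially, gluing two trees at a node yields a tree, so this branching is free. For path-decompositions there is no such freedom: the two pieces must be laid out linearly, with the left adhesion $V_{t-1}\cap V_t$ landing in the first piece and the right adhesion $V_t\cap V_{t+1}$ in the second. Nothing about the Menger separation $(X,Y)$ forces $V_{t-1}\cap V_t\subseteq X$ or $V_t\cap V_{t+1}\subseteq Y$, and in the bidirected-tree example these containments demonstrably fail for every choice of $(X,Y)$. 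Consequently the up-shift/down-shift splice either fails to produce an $\ra{S}$-path at all, or reintroduces a bag at least as large as $V_t$, so the refined partial order does not decrease. The asymmetry you worried about in the directed setting is real, but it is not the heart of the matter: the obstruction is already present for undirected path-decompositions, because paths cannot branch.
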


However, we note that this theorem cannot hold. Indeed, consider a perfect binary tree of depth $n$, with all edges in both directions. Let us write $T_n$ for the undirected tree and $\ra{T}_n$ for the directed graph. It it easy to see that the directed path-width of $\ra{T}_n$ is equal to the path width of $T_n$, which is $\frac{n-1}{2}$. Hence, in any directed path-decomposition of $\ra{T}_n$ there is some bag of size at least $\frac{n+1}{2}$. Suppose that a lean directed path-decomposition exists, let us denote by $V_i$ the bag such that $|V_i| \geq \frac{n+1}{2}$.

If we consider $V_i$ as a subset of $T_n$, then it follows from (\ref{e:lean}) that for every $k>0$ and every $Z_1,Z_2 \subset V_t$ with $|Z_1|= |Z_2| = k$, $T_n$ contains $k$ vertex-disjoint paths between $Z_1$ and $Z_2$. This property is known in the literature as being \emph{well-linked}, and the size of the largest well-linked set in a graph is linearly related to the tree-width (see for example \cite{HW17}). Specifically, since $T_n$ contains a well-linked set of size $\geq \frac{n+1}{2}$ it follows that tw$(T_n) \geq \frac{n+1}{6}$. However, the tree-width of any tree is one, contradicting the existence of a lean directed path-decomposition of $\ra{T}_n$ for $n \geq 6$.

\bibliographystyle{plain}
\bibliography{dual}

\end{document}